\newtheorem{thm}{Theorem}[section]
\newtheorem{exm}[thm]{Example}
\newtheorem{lem}[thm]{Lemma}
\newtheorem{cor}[thm]{Corollary}
\newcommand{\abs}[1]{\left\vert#1\right\vert}
\newcommand{\aut}{\textit{Aut}}
\newcommand{\upfit}[1]{\lceil#1\rceil}
\newcommand{\lowfit}[1]{\lfloor#1\rfloor}
\newcommand{\dbra}[1]{[\![#1]\!]}
\newcommand{\ffi}{ \Leftrightarrow\ }
\newcommand{\rank}{\texttt{rank}}
\newcommand{\R}{\mathit{R}}
\newcommand{\RPlus}{\textit{R}_{+}}
\newcommand{\set}[1]{\left\{#1\right\}}
\newcommand{\tr}{\texttt{Tr}}
\newcommand{\vecc}{\texttt{vec}}
\newcommand{\vecr}{\texttt{vecr}}
\newcommand{\al}{\alpha}
\newcommand{\be}{\beta}
\newcommand{\hal}{\hat{\al}}
\newcommand{\la}{\lambda}
\newcommand{\p}{\prime}
\newcommand{\eps}{\epsilon}
\newcommand{\beps}{\bf{\eps}}
\newcommand{\si}{\sigma}
\newcommand{\KK}{\mathbb{K}}
\newcommand{\A}{\mathcal{A}}
\newcommand{\tA}{\tilde{\mathcal{\A}}}
\newcommand{\B}{\mathcal{B}}
\newcommand{\C}{\mathcal{C}}
\newcommand{\F}{\mathcal{F}}
\newcommand{\G}{\mathcal{G}}
\newcommand{\GL}{\mathcal{G\!L}}
\newcommand{\caH}{\mathcal{H}}
\newcommand{\caI}{\mathcal{I}}
\newcommand{\K}{\mathcal{K}}
\newcommand{\ST}{\mathcal{S\!T}}
\newcommand{\T}{\mathcal{T}}
\newcommand{\X}{\mathcal{X}}
\newcommand{\Y}{\mathcal{Y}}
\newcommand{\bx}{\textbf{x}}
\newcommand{\by}{\textbf{y}}
\newcommand{\bfe}{\textbf{e}}
\newcommand{\mnrts}{$m$th order $n$-dimensional real tensors }
\newcommand{\mnsts}{$m$th order $n$-dimensional symmetric tensors }
\newcommand{\beq}{\begin{equation}}
\newcommand{\eeq}{\end{equation}}
\newcommand{\bey}{\begin{eqnarray}}
\newcommand{\eey}{\end{eqnarray}}
\newcommand{\beyy}{\begin{eqnarray*}}
\newcommand{\eeyy}{\end{eqnarray*}}
\title{Commutation matrices and Commutation tensors$^{\ast}$}
\author{Changqing Xu\thanks{School of Mathematics and Physics, Suzhou University of Science and Technology, Suzhou, China. Email: cqxurichard@usts.edu.cn.}
\footnote{This work was partially presented at the international conference on Matrix analysis and Applications(ICMAA2018), Nagano, Japan in June 22-24, 2018, and the 2018 workshop on Matrices and Operators (MAO2018), Shanghai in July 15-17, 2018.}}
      \def\@setcopyright{}
      \def\serieslogo@{}
\begin{document}

\maketitle

\begin{abstract}
The commutation matrix was first introduced in statistics as a transposition matrix by Murnaghan in 1938. In this paper, we first investigate the commutation matrix 
which is employed to transform a matrix into its transpose.  We then extend the concept of the commutation matrix to commutation tensor and use the commutation 
tensor to achieve the unification of the two formulae of the linear preserver of the matrix rank, a classical result of Marcus in 1971.  
\end{abstract}

\noindent \textbf{keywords:} \  Commutation matrix; commutation tensor;  Linear preserver;  determinant;  transpose.\\
\noindent \textbf {AMS Subject Classification}: \   15A69,  15A86.  \\

\section{Introduction}
\setcounter{equation}{0}

The commutation matrix was introduced by Murnaghan in 1938 in the name of permutation matrices. It is also referred in publications on statistics 
as the transposition matrix.  A commutation matrix is a kind of permutation matrix of order $pq$ expressed as a block matrix where each block is of the 
same size and has a unique 1 in it.  The commutation matrix can be used to describe the relationship of a Kronecker product $A\otimes B$ with $B\otimes A$
where $A,B$ are two arbitrary matrices of any sizes. In this paper, we extend the commutation matrix to a commutation tensor, which is a fourth order tensor,
by which we can express the transpose of a matrix as the linear transformation on it. This is further used to deduce some properties of the commutation tensors,
and consequently we achieve the unification of the linear preservers of the determinants of matrices. \\
\indent For our convenience, we denote $\dbra{m}$ for the set $\set{1,2,\ldots,m}$ for any positive integer, and $S_{m}$ the set of all the permutations on 
$\dbra{m}$. Also we denote by $[a/b]$ the quotient of an integer $a$ divided by a positive integer $b$, similarly we denote $\upfit{a/b}$ ($\lowfit{a/b}$) for 
the upper (resp. lower) quotient of an integer $a$ divided by a positive integer $b$. We write $n(p,q)=pq$ or simply $n=pq$ when no risk of confusion arises for 
any positive integers $p,q$.  Denote $e_{n,i}$ for the $i$th canonical vector of dimension $n$, i.e., the vector with 1 in the $i$th coordinate and 0 elsewhere, and 
denote $E^{(m,n)}_{ij}=e_{m,i}e_{n,j}^{\top}$, and $E_{ij}^{(n)}=E^{(m,n)}_{ij}$ when $m=n$.  We usually denote them by $E_{ij}$ when $m,n$ are known from the 
context.  A permutation matrix  $P=(p_{ij})\in \R^{n\times n}$ is called a \emph{commutation matrix} if it satisfies the following conditions:
\begin{description}
  \item[(a)]  $P=(P_{ij})$ is an $p\times q$ block matrix with each block $P_{ij}\in \R^{q\times p}$.
  \item[(b)]  For each $i\in [p], j\in [q]$, $P_{ij}=(k_{st}^{(i,j)})$ is a (0,1) matrix with a unique 1 which lies at the position $(j,i)$.     
\end{description}
We denote this commutation matrix by $K_{p,q}$.  Thus a commutation matrix is of size $pq\times pq$. \\
\begin{exm}
$K_{2,3}$ is a $6\times 6$ permutation matrix partitioned as a $2\times 3$ block matrix, i.e.,  
\beq\label{eq:cmex01}
 K_{2,3} =\begin{pmatrix} H_{11} & H_{12} & H_{13} \\  H_{21} & H_{22} & H_{23}\end{pmatrix}
\eeq
where each block $H_{ij}=(h^{(i,j)}_{s,t})$ is a $3\times 2$ matrix whose unique nonzero entry is $h^{(i,j)}_{j,i}=1$.  Specifically 
 \beq\label{ex:01}
 K_{2,3} = \begin{bmatrix}
                  1&0&0&0&0&0\\  
                  0&0&1&0&0&0\\  
                  0&0&0&0&1&0\\ 
                  0&1&0&0&0&0\\
                  0&0&0&1&0&0\\
                  0&0&0&0&0&1\end{bmatrix} 
\eeq 
\end{exm}
\indent  Let $A=K_{p,q}=(A_{st})$ be an $p\times q$ block matrix with block size $p\times q$. For each pair $(i, j)$ with $i\in \dbra{p}, j\in \dbra{q}$, we 
denote $A_{ijkl}$ as the entry at the position $(k,l)$ in the block $A_{ij}$.  We denote $A_{ij}(k,l)$ for the $(i, j)$-entry of block $A_{ij}$. Then 
$A_{ij}(kl)$ is the $(s, t)$-entry of $A$, denoted $a_{s t}$ where 
\beq\label{eq: 4to2}
s = (i-1)q+k, \qquad  t = (j-1)p+l.  
\eeq
Conversely,  given an $(s, t)$th entry of $K_{p,q}$, we can also find its position according to the block form of $A$, i.e., $a_{s t}=A_{k_{1}k_{2}k_{3}k_{4}}$, 
where 
\beyy\label{eq: 2to4}
   k_{1}=\upfit{i/q},                         &k_{2}=\upfit{j/p}. \\
   k_{3}=i-k_{1}q (\texttt{mod}\ q ), & k_{4}= j - k_{2}p (\texttt{mod}\ p). 
\eeyy
\indent The results in the following lemma are some fundamental properties for commutation matrices. 
\begin{lem}\label{le1}
Let $K_{p,q}$ be the commutation matrix. Then we have 
\begin{enumerate}
\item $K_{p,q}^{\top} = K_{q,p}$ and $K_{p,q} K_{q,p}=I_{pq}$. 
\item $K_{p,1}=K_{1,p} = I_{p}$.
\item $\det(K_{p,p})=(-1)^{\frac{p(p-1)}{2}}$.    
\end{enumerate}
\end{lem}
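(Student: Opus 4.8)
The plan is to work directly from the combinatorial description of $K_{p,q}$ as a permutation matrix and to track a single global index map. First I would record that, by conditions (a)--(b) in the definition, the $(s,t)$ entry of $K_{p,q}$ equals $1$ precisely when $s=(i-1)q+j$ and $t=(j-1)p+i$ for some (necessarily unique) pair $(i,j)\in\dbra{p}\times\dbra{q}$. Equivalently, writing the bijection $\phi_{p,q}\colon\dbra{p}\times\dbra{q}\to\dbra{pq}$, $(i,j)\mapsto(i-1)q+j$, the permutation $\pi_{p,q}$ underlying $K_{p,q}$ is the one that, transported through $\phi_{p,q}$ and $\phi_{q,p}$, is the coordinate swap $(i,j)\mapsto(j,i)$. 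Establishing that $\phi_{p,q}$ is indeed a bijection onto the support (a division-with-remainder argument on $\dbra{pq}$) is the one computation to do carefully; everything else is then immediate.

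For part 1, I would compute $(K_{p,q}^{\top})_{st}=(K_{p,q})_{ts}$, which is $1$ iff $t=(i-1)q+j$ and $s=(j-1)p+i$ for some $(i,j)\in\dbra{p}\times\dbra{q}$. Relabelling $i'=j\in\dbra{q}$ and $j'=i\in\dbra{p}$ rewrites this as $s=(i'-1)p+j'$ and $t=(j'-1)q+i'$, which is exactly the defining condition for $(K_{q,p})_{st}=1$. Hence $K_{p,q}^{\top}=K_{q,p}$. Since $K_{p,q}$ is a permutation matrix it is orthogonal, so $K_{p,q}K_{q,p}=K_{p,q}K_{p,q}^{\top}=I_{pq}$.

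For part 2, I would specialize $q=1$: the index relations force $s=(i-1)\cdot 1+1=i$ and $t=(1-1)p+i=i$, so the nonzero entries of $K_{p,1}$ lie exactly on the diagonal, i.e. $K_{p,1}=I_{p}$; then $K_{1,p}=K_{p,1}^{\top}=I_{p}$ by part 1. For part 3, I would identify $\pi_{p,p}$ with the coordinate swap $\sigma\colon(i,j)\mapsto(j,i)$ on $\dbra{p}\times\dbra{p}$: its fixed points are the $p$ diagonal pairs $(i,i)$, and the remaining elements split into the $\binom{p}{2}$ transpositions $\{(i,j),(j,i)\}$ with $i<j$. A product of $\binom{p}{2}$ disjoint transpositions has sign $(-1)^{\binom{p}{2}}=(-1)^{p(p-1)/2}$, and since the determinant of a permutation matrix is the sign of its permutation, $\det(K_{p,p})=(-1)^{p(p-1)/2}$.

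The only genuinely fiddly step is the index bookkeeping that certifies $\phi_{p,q}$ is a bijection and that the relabelling in part 1 is consistent; once that groundwork is laid, each of the three assertions follows in a line, so no real obstacle remains.
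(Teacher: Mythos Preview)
Your proposal is correct. For items 1 and 2 the paper simply declares them ``obvious'' and omits any argument, so your explicit index bookkeeping via the bijection $\phi_{p,q}$ is entirely adequate and in the same spirit.

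For item 3 the approaches genuinely diverge. The paper's proof of Lemma~\ref{le1} actually leaves item 3 unproved (the proof environment closes right after ``We need only to prove the last item''); the assertion is then restated and proved later as Theorem~\ref{th: 2.4}(3). There the argument is spectral: $K_{p}$ is shown to be a symmetric involution, hence orthogonally diagonalisable with eigenvalues $\pm 1$; the trace identity $\tr(K_{p})=p$ (from the diagonal blocks $E_{ii}$) forces the multiplicity of $+1$ to be $r=\tfrac{1}{2}p(p+1)$, whence $\det(K_{p})=(-1)^{p^{2}-r}=(-1)^{p(p-1)/2}$. Your route instead reads off the cycle structure of the underlying permutation directly: the coordinate swap on $\dbra{p}\times\dbra{p}$ has $p$ fixed points and $\binom{p}{2}$ disjoint transpositions, so its sign is $(-1)^{\binom{p}{2}}$. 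Your argument is more elementary and self-contained---it needs neither the trace computation nor the involution property---while the paper's argument has the virtue of simultaneously yielding the eigenvalue multiplicities of $K_{p}$, which may be of independent interest. Both are short and clean once the index map is established.
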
 

\begin{proof}
The first two items are obvious. We need only to prove the last item. 

\end{proof}

\indent Let $A\in \R^{m\times n}, B\in \R^{p\times q}$. The Kronecker product of $A,B$, denoted $A\otimes B$, is defined as an $mp\times nq$ matrix in the $m\times n$ 
block form, i.e., $C=A\otimes B=(C_{ij})_{1\le i\le m,1\le j \le n}$ with $C_{ij}=a_{ij}B$.  The following propositions on the Kronecker product of matrices will be used in the 
sequence. 

\begin{lem}\label{le2}
Let $A_{i}\in \R^{m_{i}\times n_{i}}, B_{i}\in \R^{n_{i}\times p_{i}}$ for $i=1,2$.  Then 
\begin{description}
  \item[(1)]  $(A_{1}\otimes A_{2})(B_{1}\otimes B_{2}) = (A_{1}B_{1})\otimes (A_{2}B_{2})$. 
  \item[(2)]  $(A_{1}\otimes A_{2})^{\p} = (A_{1})^{\p} \otimes (A_{2})^{\p}$. 
  \item[(3)]  Let $A,  B$ be both invertible.  Then $A\otimes B$ is invertible with its inverse $(A\otimes B)^{-1}  = A^{-1}\otimes B^{-1}$. 
\end{description}
\end{lem}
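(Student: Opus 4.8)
The plan is to derive all three items straight from the block definition of the Kronecker product, deducing item (3) from item (1).

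For item (1) I would view $A_1\otimes A_2$ as an $m_1\times n_1$ array of blocks with $(i,j)$ block $(A_1)_{ij}A_2 \in \R^{m_2\times n_2}$, and $B_1\otimes B_2$ as an $n_1\times p_1$ array of blocks with $(j,k)$ block $(B_1)_{jk}B_2 \in \R^{n_2\times p_2}$. The inner block dimensions match, so the product is a legitimate $m_1\times p_1$ block matrix whose $(i,k)$ block is
\[
\sum_{j=1}^{n_1}\big((A_1)_{ij}A_2\big)\big((B_1)_{jk}B_2\big)=\Big(\sum_{j=1}^{n_1}(A_1)_{ij}(B_1)_{jk}\Big)A_2B_2=(A_1B_1)_{ik}\,(A_2B_2),
\]
which is precisely the $(i,k)$ block of $(A_1B_1)\otimes(A_2B_2)$. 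Matching blocks gives the identity.

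For item (2) I would observe that transposing the block matrix $A_1\otimes A_2$ moves its $(i,j)$ block $(A_1)_{ij}A_2$ into position $(j,i)$ and transposes it there, producing $(A_1)_{ij}A_2^{\p}$; this equals the $(j,i)$ block $(A_1^{\p})_{ji}A_2^{\p}$ of $A_1^{\p}\otimes A_2^{\p}$, so the two matrices coincide block by block. For item (3) I would first record the special case $I_m\otimes I_n=I_{mn}$ (diagonal blocks $I_n$, off-diagonal blocks zero), and then apply item (1): $(A\otimes B)(A^{-1}\otimes B^{-1})=(AA^{-1})\otimes(BB^{-1})=I_m\otimes I_n=I_{mn}$, and symmetrically $(A^{-1}\otimes B^{-1})(A\otimes B)=I_{mn}$, so $A\otimes B$ is invertible with inverse $A^{-1}\otimes B^{-1}$.

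None of these steps is genuinely hard: the whole lemma is a bookkeeping exercise. If anything requires care it is only tracking the dimensions $m_i,n_i,p_i$ in item (1) so that every block product is defined and the index ranges line up; once that setup is in place, items (2) and (3) follow with essentially no additional work.
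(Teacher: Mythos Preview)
Your proof is correct and entirely standard. The paper itself provides no proof of this lemma: it is stated as a list of well-known Kronecker product identities and then used without further justification. Your blockwise verification of the mixed-product rule, the transpose rule, and the deduction of the inverse formula from item~(1) is exactly the textbook argument, and nothing more is needed here.
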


\indent  The matrix vectorisation, denoted $\vecc(\dot)$, is to turn a matrix into a column vector by vertically stacking all the columns of the matrix in a nature order.  
More specifically, let $A\in \R^{p\times q}$ and $A_{*j}$ be the $j$th column of $A$.  Then consequence of the \emph{vectorisation} of $A$ is an $pq$-dimensional 
vector $\vecc(A)$ with 
\[ \vecc(A)^{\top}:=[A_{*1}^{\p},A_{*2}^{\p},\ldots, A_{*n}^{\p}] \in \R^{pq} \]
Conversely, a vector $\bx=(x_{1}, x_{2}, \ldots, x_{n} )^{\top}\in \R^{n}$ with length $n=pq$ ($p,q>1$) can always be reshaped (matricized) into an $p\times q$ matrix 
$X$ either by column (i.e., the first $p$ entries of $\bx$ form the first column, the next $p$ entries form the second column, etc.). Similarly we can also matricize vector 
$\bx$ rowisely.  Both can be regarded as an 1-1 correspondence between $\R^{pq}$ and $\R^{p\times q}$.  The elements of the $p\times q$ matrix 
$X=(m_{ij})\in \R^{p\times q}$ obtained from the columnwise matricization is defined by 
\beq\label{eq: vec2matrx01}
 c_{ij} = x_{i+(j-1)p}, \quad   \forall 1\le i\le p, 1\le j\le q
\eeq
and the elements of the $p\times q$ matrix $X=(m_{ij})\in \R^{p\times q}$ obtained from the rowise matricization is defined by 
\beq\label{eq: vec2matrx02}
r_{ij} = x_{j+(i-1)q}, \quad   \forall 1\le i\le p, 1\le j\le q
\eeq
We use $\vecc^{-1}_{p,q}$ to denote for the columnwise matricization of an $pq$-dimensional vector into an $p\times q$ matrix, and use  
$\vecr^{-1}_{p,q}$ to denote the rowise matricization of an $pq$-dimensional vector into an $p\times q$ matrix.\\
\indent The following property, which can be found in many textbook on the matrix theory, is crucial to the multivariate statistical models.  
        
\begin{lem}\label{le3}
Let $A\in \R^{m\times n},  B\in \R^{n\times p},  C\in \R^{p\times q}$.  Then we have   
\beq\label{kroneckp301}
\vecc(ABC)  = (C^{\p}\otimes A) \vecc(B)
\eeq
For $p=m$, we also have  
\beq\label{kroneckp302}
\tr(AB)  = \vecc(B)^{\p}\vecc(A)
\eeq
\end{lem}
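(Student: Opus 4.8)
The plan is to establish \eqref{kroneckp301} first and then read off \eqref{kroneckp302} from it. For \eqref{kroneckp301} I would argue column by column. Writing $C=(c_{lj})\in\R^{p\times q}$ and letting $B_{*l}$ be the $l$th column of $B$, the $j$th column of $C$ is $C_{*j}=\sum_{l=1}^{p}c_{lj}e_{p,l}$, so that
\beq
(ABC)_{*j}=(AB)C_{*j}=\sum_{l=1}^{p}c_{lj}\,(AB)_{*l}=\sum_{l=1}^{p}c_{lj}\,A\,B_{*l},\qquad j\in\dbra{q}.
\eeq
On the other side, $C^{\p}\otimes A$ is a $q\times p$ block matrix whose $(j,l)$ block is $(C^{\p})_{jl}A=c_{lj}A$; reading $\vecc(B)$ as the stack of the $p$ blocks $B_{*1},\dots,B_{*p}$, the $j$th block of $(C^{\p}\otimes A)\vecc(B)$ is $\sum_{l=1}^{p}c_{lj}A B_{*l}$, which matches the $j$th block of $\vecc(ABC)$ computed above. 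Stacking over $j\in\dbra{q}$ gives \eqref{kroneckp301}.

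A shorter, more conceptual route --- and the one that actually uses Lemma \ref{le2} --- is to note that both sides of \eqref{kroneckp301} are linear in $B$, so it suffices to take $B=e_{n,k}e_{p,l}^{\top}$. Then $ABC=(Ae_{n,k})(e_{p,l}^{\top}C)=A_{*k}\,C_{l*}$ is rank one, so $\vecc(ABC)=(C_{l*})^{\top}\otimes A_{*k}$ by the elementary identity $\vecc(uv^{\top})=v\otimes u$, while $(C^{\p}\otimes A)\vecc(e_{n,k}e_{p,l}^{\top})=(C^{\p}\otimes A)(e_{p,l}\otimes e_{n,k})=(C^{\p}e_{p,l})\otimes(Ae_{n,k})=(C_{l*})^{\top}\otimes A_{*k}$ by Lemma \ref{le2}(1). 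The two expressions coincide, and this is where Lemma \ref{le2} earns its place.

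For \eqref{kroneckp302} I would assume $p=m$, so that $AB$ is square and $\tr(AB)$ makes sense. The key observation is that for any square $M\in\R^{m\times m}$ one has $\tr(M)=\vecc(I_{m})^{\top}\vecc(M)$, since $\vecc(I_{m})=\sum_{i=1}^{m}e_{m,i}\otimes e_{m,i}$ is exactly the $0/1$ vector supported on the diagonal positions. Taking $M=AB=I_{m}AB$ and applying \eqref{kroneckp301} gives $\vecc(AB)=(B^{\top}\otimes I_{m})\vecc(A)$, hence $\tr(AB)=\vecc(I_{m})^{\top}(B^{\top}\otimes I_{m})\vecc(A)$; expanding $\vecc(I_m)$ via the rank-one sum and using Lemma \ref{le2}(1) collapses the right-hand side to the claimed inner product. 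Alternatively one expands everything in coordinates and matches the diagonal entries $\sum_{i}(AB)_{ii}=\sum_{i,k}a_{ik}b_{ki}$ against the inner product term by term.

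I do not expect a genuine obstacle here: the whole statement is bookkeeping with the column-stacking convention and with index conversions of the type $s=(i-1)q+k$ recorded earlier. The one point that needs care is the transpose on $C$ in \eqref{kroneckp301} --- it is precisely what forces the $(j,l)$ block of $C^{\p}\otimes A$ to be $c_{lj}A$ rather than $c_{jl}A$, so that the blocks line up with those of $\vecc(ABC)$; this is the easiest place to slip on an index. For \eqref{kroneckp302} the only thing to watch is that the hypothesis $p=m$ is genuinely needed for the trace to be defined, after which it is just the standard trace--$\vecc$ duality, to be stated with whichever vectorisation convention keeps the diagonal sum $\sum_{i,k}a_{ik}b_{ki}$ on the right-hand side.
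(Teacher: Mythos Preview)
The paper does not supply a proof of this lemma --- it is quoted as a textbook fact --- so there is nothing to compare your approach against. Your argument for \eqref{kroneckp301} is correct; either the column-by-column computation or the rank-one reduction via Lemma~\ref{le2}(1) goes through cleanly.

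For \eqref{kroneckp302}, however, there is a genuine gap, and it originates in the statement rather than in your strategy. As written, with $A\in\R^{m\times n}$ and $B\in\R^{n\times m}$, the identity $\tr(AB)=\vecc(B)^{\top}\vecc(A)$ is \emph{false} when $m\neq n$: both vectors have length $mn$, but column-stacking an $m\times n$ matrix and an $n\times m$ matrix assigns different $(\text{row},\text{column})$ pairs to the same position $s\in\dbra{mn}$, so the inner product does not reduce to $\sum_{i,k}a_{ik}b_{ki}$. (Try $m=2$, $n=3$: the inner product contains the term $a_{21}b_{21}$ where $\tr(AB)$ has $a_{12}b_{21}$.) The correct identity is $\tr(AB)=\vecc(B^{\top})^{\top}\vecc(A)=\vecc(A^{\top})^{\top}\vecc(B)$. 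Your own derivation via $\vecc(I_m)^{\top}(B^{\top}\otimes I_m)\vecc(A)$ actually confirms this: carrying the computation one step further yields
\[
\bigl((B^{\top}\otimes I_m)^{\top}\vecc(I_m)\bigr)^{\top}\vecc(A)
=\Bigl(\sum_{i}(Be_{m,i})\otimes e_{m,i}\Bigr)^{\top}\vecc(A)
=\vecc(B^{\top})^{\top}\vecc(A),
\]
not $\vecc(B)^{\top}\vecc(A)$. So the step where you assert that ``the right-hand side collapses to the claimed inner product'' is exactly where the argument breaks, and your coordinate-matching alternative fails for the same reason. The remedy is either to insert the missing transpose on $B$ or to add the hypothesis $m=n$; once that is done, your sketch becomes a complete proof.
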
 

 \indent In the next section, we first present some basic properties of the commutation matrices. Section 3 is dedicated to the commutation tensors where we first present 
 the definition of the commutation tensor and study some of its properties. In Section 4 we employ the commutation tensor to study the linear preserving problem (LPP) 
 and extend the LPP to a more general multilinear preserving problem (MLPP). We will also use the relationship between a matrix and its transpose through the commutation
 matrix (tensor) to unify the form of a linear determinant preserver and linear rank preserver.  

\vskip 5pt

\section{Commutation matrices}
\setcounter{equation}{0}

The following result presents a linear relationship between $\vecc(A^{\top})$ and $\vecc(A)$ through the commutation matrix $K_{p,q}$. 

\begin{thm}\label{th1}
Let $p,q$ be two positive integers. Then 
\beq\label{eq: a2at}
 K_{p,q}\vecc(X)  = \vecc(X^{\top}), \quad \forall X\in \R^{p\times q}
\eeq
Furthermore, $K_{p,q}$ is the unique matrix for (\ref{eq: a2at}) to be held. 
\end{thm}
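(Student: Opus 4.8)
The plan is to prove the identity \eqref{eq: a2at} by computing the $s$-th coordinate of each side and checking they agree for every $s \in \dbra{pq}$, and then to argue uniqueness by a standard basis argument. For the identity, fix $X = (x_{ij}) \in \R^{p\times q}$. By definition of $\vecc$, the entry of $\vecc(X^\top)$ in position $s$, where we write $s = (j-1)p + i$ with $1 \le i \le p$ and $1 \le j \le q$, is the $(j,i)$-entry of $X^\top$, namely $x_{ij}$. On the other hand, $\bigl(K_{p,q}\vecc(X)\bigr)_s = \sum_t (K_{p,q})_{st} \bigl(\vecc(X)\bigr)_t$, and since $K_{p,q}$ has exactly one $1$ in row $s$, this sum collapses to $\bigl(\vecc(X)\bigr)_{t(s)}$ for the unique column index $t(s)$ with $(K_{p,q})_{s,t(s)} = 1$. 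The heart of the argument is to track, through the index bookkeeping of \eqref{eq: 4to2}, that $\bigl(\vecc(X)\bigr)_{t(s)}$ is precisely $x_{ij}$; here one writes $s$ in its block form $s = (a-1)q + k$ (row block $a$, local row $k$) and uses condition (b) in the definition of $K_{p,q}$ — the $1$ in block $P_{ab}$ sits at local position $(b,a)$ — together with \eqref{eq: vec2matrx01} relating positions in $\vecc(X)$ back to entries of $X$.

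The cleaner route, which I would actually carry out, avoids raw index chasing: apply Lemma~\ref{le3}. Taking $A = I_p$, $B = X$, $C = I_q$ gives nothing; instead use the representation $X^\top = I_q\, X^\top\, I_p$ together with the observation that column-stacking $X^\top$ rearranges the entries of $\vecc(X)$ by exactly the permutation encoded in $K_{p,q}$. Concretely, both $\vecc(X)$ and $\vecc(X^\top)$ list the entries $x_{ij}$ of $X$, just in different orders: $\vecc(X)$ orders them lexicographically by $(j,i)$ (column index slowest), while $\vecc(X^\top)$ orders them lexicographically by $(i,j)$ (row index slowest). So there is a unique permutation matrix $\Pi$ of order $pq$ with $\Pi\,\vecc(X) = \vecc(X^\top)$ for all $X$, and its $(s,t)$ entry is $1$ exactly when position $s$ in the $(i,j)$-lex order and position $t$ in the $(j,i)$-lex order point to the same pair $(i,j)$. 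Writing out that coincidence of positions and comparing with \eqref{eq: 4to2} shows $\Pi = K_{p,q}$. This identification is the step I expect to be the main (if modest) obstacle, since it is purely a matter of matching two indexing conventions and it is easy to slip a transpose or an off-by-one; doing it on the $K_{2,3}$ example of \eqref{ex:01} first is a useful sanity check.

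Finally, uniqueness: suppose $M \in \R^{pq \times pq}$ also satisfies $M\,\vecc(X) = \vecc(X^\top)$ for all $X \in \R^{p\times q}$. The map $X \mapsto \vecc(X)$ is a linear bijection from $\R^{p\times q}$ onto $\R^{pq}$, so as $X$ ranges over $\R^{p\times q}$ the vectors $\vecc(X)$ range over all of $\R^{pq}$; in particular, applying the hypothesis to the matrices $X = E_{ij}^{(p,q)}$, whose vectorisations $\vecc(E_{ij}^{(p,q)})$ are exactly the canonical basis vectors of $\R^{pq}$, we get $M e_{pq,t} = K_{p,q}\, e_{pq,t}$ for every $t$, i.e., $M$ and $K_{p,q}$ agree column by column. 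Hence $M = K_{p,q}$, completing the proof.
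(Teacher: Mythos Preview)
Your proposal takes essentially the same route as the paper: verify \eqref{eq: a2at} coordinate by coordinate via the block structure of $K_{p,q}$ (the paper writes $s=(i-1)q+j$ and computes $y_s=\bigl(\sum_k H_{ik}X_{\cdot k}\bigr)_j=x_{ij}$), and then deduce uniqueness from the surjectivity of $\vecc$ onto $\R^{pq}$, which is exactly your $(M-K_{p,q})\vecc(X)=0$ argument made explicit on the basis $\vecc(E_{ij})$. One small slip to fix when you write it up: in your first paragraph the decomposition $s=(j-1)p+i$ is the indexing for $\vecc(X)$, not for $\vecc(X^\top)$; since $X^\top\in\R^{q\times p}$, the correct form is $s=(i-1)q+j$ --- precisely the off-by-one you warned yourself about, but it does not affect the plan.
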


\begin{proof}
Let $n:=pq$ and $X=(x_{ij})\in \R^{p\times q}$ and denote $\by=K_{p,q}\vecc(X)$. Then $\by\in \R^{n}$. For any $s\in \dbra{n}$, $s$ can be written as 
\[  s = (i-1)q+j, \quad 1\le i\le p, 0\le j<q \]  
that is, $i-1$ and $j$ are respectively the quotient and the remainder of the number $s$ divided by $q$.  Thus by definition,   
\beq\label{eq: prfth1} 
y_{s} = (\sum\limits_{k=1}^{q} H_{ik}X_{\cdot{}k})_{j} = x_{ij}  
\eeq
Here $X_{\cdot{}k}$ denotes the $k$th column of $X$ and $H_{ik}$ is the $(i,k)$th block of $K_{p,q}$ whose unique nonzero entry (equals 1) is at the position 
$(k,i)$ by definition. On the other hand, we have $\vecc(X^{\top})_{s}=a_{ij}$. Thus (\ref{eq: a2at}) holds.\\
\indent  Now we suppose there is a matrix $K\in \R^{n\times n}$ such that $K\vecc(X) =\vecc(X^{\top})$ holds for all $X\in \R^{p\times q}$.  Then 
$(K-K_{p,q}) \vecc(X) =0$ for all $X\in \R^{p\times q}$. It follows that $K-K_{p,q}=0$ and consequently $K=K_{p,q}$. 
\end{proof} 

\indent Theorem \ref{th1} tells us that the transpose of a matrix $A$ can be regarded as the permutation of $A$ through the commutation matrix, but this linear 
transformation is established in terms of the matrix vectorisation, which, nevertheless, alters the shape of the matrix.  But sometimes we do want to know exactly the 
relation of $A$ and its transpose while preserving its shape. This will be done in the next section.\\
\indent  We denote $e_{r, s}$ for the $s$th coordinate vector of $\R^{r}$.  The following lemma can be regarded as the rank-1 decomposition of $K_{p,q}$.  
\begin{lem}\label{le2.1}
\beq\label{eq: decompK}
K_{p,q} = \sum\limits_{i,j} (e_{p,i}\otimes e_{q,j})(e_{q,j}\otimes e_{p,i})^{\top} 
\eeq
Here the summation runs over all $i\in \dbra{p}, j\in \dbra{q}$. 
\end{lem}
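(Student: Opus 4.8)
The plan is to reduce the identity to Theorem~\ref{th1}, which converts the blocked combinatorial description of $K_{p,q}$ into a clean statement about how $K_{p,q}$ acts on the standard basis of $\R^{pq}$. First I would record the elementary fact that $\vecc(\bu\bv^{\top}) = \bv\otimes\bu$ for any column vectors $\bu,\bv$: the $k$th column of $\bu\bv^{\top}$ is $v_{k}\bu$, so stacking the columns in order yields exactly the block vector $\bv\otimes\bu$. Specializing to $\bu = e_{p,i}$ and $\bv = e_{q,j}$ gives $\vecc(E^{(p,q)}_{ij}) = e_{q,j}\otimes e_{p,i}$, and as $(i,j)$ ranges over $\dbra{p}\times\dbra{q}$ these vectors are precisely the standard basis of $\R^{pq}$; in particular $\sum_{i,j}(e_{q,j}\otimes e_{p,i})(e_{q,j}\otimes e_{p,i})^{\top} = I_{pq}$.

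Next I would apply Theorem~\ref{th1} to the matrix $X = E^{(p,q)}_{ij}$. Since $\big(E^{(p,q)}_{ij}\big)^{\top} = E^{(q,p)}_{ji}$, the same vectorisation identity gives $\vecc\big((E^{(p,q)}_{ij})^{\top}\big) = e_{p,i}\otimes e_{q,j}$, so (\ref{eq: a2at}) becomes $K_{p,q}(e_{q,j}\otimes e_{p,i}) = e_{p,i}\otimes e_{q,j}$ for all $i\in\dbra{p}$, $j\in\dbra{q}$. Inserting the resolution of the identity from the previous paragraph then yields
\beq\label{eq: lastpf}
K_{p,q} = K_{p,q}\sum_{i,j}(e_{q,j}\otimes e_{p,i})(e_{q,j}\otimes e_{p,i})^{\top} = \sum_{i,j}(e_{p,i}\otimes e_{q,j})(e_{q,j}\otimes e_{p,i})^{\top},
\eeq
which is exactly (\ref{eq: decompK}).

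As a cross-check, or as a self-contained alternative that avoids Theorem~\ref{th1}, I would instead identify the unique nonzero entry of each rank-one summand directly. By the definition of the Kronecker product of vectors, $e_{p,i}\otimes e_{q,j}$ is the $\big((i-1)q+j\big)$th coordinate vector of $\R^{pq}$ and $e_{q,j}\otimes e_{p,i}$ is the $\big((j-1)p+i\big)$th coordinate vector, so the summand is the matrix unit with a single $1$ at position $\big((i-1)q+j,\;(j-1)p+i\big)$. Comparing with (\ref{eq: 4to2}) (taking $k=j$, $l=i$), this is precisely the location of the $1$ in the $(i,j)$-block $P_{ij}$ of $K_{p,q}$, namely within-block position $(j,i)$. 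Summing over all $i\in\dbra{p}$, $j\in\dbra{q}$ therefore rebuilds $K_{p,q}$ entry by entry.

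The only delicate point in either route is the bookkeeping between the two indexing conventions on $\R^{pq}$: the blocked coordinates used to \emph{define} $K_{p,q}$ versus the flattened coordinates induced by $\vecc(\cdot)$ and by the vector Kronecker product. Once the normalization $\vecc(\bu\bv^{\top}) = \bv\otimes\bu$ is pinned down — equivalently, once the conversion (\ref{eq: 4to2}) is fixed — there is no genuine obstacle; everything else is a routine check.
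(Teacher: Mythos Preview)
Your proof is correct, but your primary route differs from the paper's. The paper argues directly from the block definition of $K_{p,q}$: it writes $K_{p,q}=\sum_{i,j}F^{(i,j)}$ where $F^{(i,j)}$ is the $p\times q$ block matrix whose only nonzero block is the $(i,j)$th one, equal to $E_{ji}^{(q,p)}$; it then identifies $F^{(i,j)}$ with the Kronecker product $e_{p,i}e_{q,j}^{\top}\otimes e_{q,j}e_{p,i}^{\top}$ (this is just the block interpretation of $A\otimes B$), and finally applies the mixed-product rule of Lemma~\ref{le2}(1) to factor it as $(e_{p,i}\otimes e_{q,j})(e_{q,j}\otimes e_{p,i})^{\top}$. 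Your main argument instead derives the lemma from Theorem~\ref{th1} via the identity $\vecc(\bu\bv^{\top})=\bv\otimes\bu$ and a resolution of the identity on $\R^{pq}$; this is not circular, since Theorem~\ref{th1} is proved independently of Lemma~\ref{le2.1}. The payoff of your approach is conceptual: it exhibits (\ref{eq: decompK}) as nothing more than Theorem~\ref{th1} evaluated on the standard basis, and it sidesteps the block-Kronecker bookkeeping entirely. The paper's route, on the other hand, is self-contained and makes the Kronecker structure of $K_{p,q}$ explicit, which is what feeds directly into the proof of Theorem~\ref{th2.2}. Your alternative entry-by-entry check is essentially the paper's argument carried out at the scalar level rather than the block level.
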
  

\begin{proof}
We let $F^{(i,j)}=(G_{st})\in \R^{pq\times pq}$ be the $p\times q$ block matrix, each block $G_{st}\in \R^{q\times p}$ is a zero block except the $(i,j)$th block 
$G_{ij}=E_{ji}^{\top}$. Here $E_{ji}=e_{q,j}e_{p,i}^{\top}$ is the elementary matrix with the unique 1 at position $(j,i)$.  Then it is obvious that   
\beq\label{eq: decompK01} 
K_{p,q} = \sum\limits_{i\in\dbra{p}, j\in\dbra{q}} F^{(i,j)} 
\eeq
Note that 
\beq\label{eq: decompK02} 
F^{(i,j)} = e_{p,i}e_{q,j}^{\top}\otimes e_{q,j}e_{p,i}^{\top} = (e_{p,i}\otimes e_{q,j})(e_{q,j}\otimes e_{p,i})^{\top} 
\eeq
The last equality of (\ref{eq: decompK02}) follows directly from the first item of Lemma \ref{le2}, and the decomposition (\ref{eq: decompK}) follows directly from the 
combination of  (\ref{eq: decompK01}) and (\ref{eq: decompK02}).  
\end{proof} 
 
\indent The following result, showing an essential role of the commutation matrix in the linear and multilinear algebra, will be employed in the proof of our main result.  
\begin{thm}\label{th2.2}
Let $A\in \R^{pq\times pq}$ where $p, q >1$ are positive integers. Then  
\beq\label{eq:Kbasic}
A (\bx\otimes \by) = \by\otimes \bx,  \quad  \forall \bx\in \R^{q}, \by\in \R^{p}  
\eeq
if and only if  $A=K_{p,q}$.  
\end{thm}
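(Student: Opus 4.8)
The plan is to prove both implications, deriving the first identity quickly from Theorem~\ref{th1}. For sufficiency, fix $\bx\in\R^{q}$ and $\by\in\R^{p}$ and consider the rank-one matrix $X=\by\bx^{\top}\in\R^{p\times q}$. Its $j$th column is $x_{j}\by$, so $\vecc(X)=\bx\otimes\by$; dually $X^{\top}=\bx\by^{\top}$ has $i$th column $y_{i}\bx$, so $\vecc(X^{\top})=\by\otimes\bx$. Applying~(\ref{eq: a2at}) to this $X$ then yields
\[
K_{p,q}(\bx\otimes\by)=K_{p,q}\vecc(X)=\vecc(X^{\top})=\by\otimes\bx ,
\]
which is exactly~(\ref{eq:Kbasic}) with $A=K_{p,q}$. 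I note in passing that one could instead substitute the rank-one decomposition~(\ref{eq: decompK}) of Lemma~\ref{le2.1} and use Lemma~\ref{le2}(1)--(2) to collapse $(e_{q,j}\otimes e_{p,i})^{\top}(\bx\otimes\by)=x_{j}y_{i}$, arriving at $\sum_{i,j}y_{i}x_{j}(e_{p,i}\otimes e_{q,j})=\by\otimes\bx$.

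For necessity, I would start from the hypothesis that $A$ satisfies $A(\bx\otimes\by)=\by\otimes\bx$ for all $\bx\in\R^{q}$, $\by\in\R^{p}$. Since $K_{p,q}$ satisfies the same identity by the sufficiency part just proved, the matrix $B:=A-K_{p,q}$ annihilates every simple tensor $\bx\otimes\by$. The key observation is then that these simple tensors span $\R^{pq}$: taking $\bx=e_{q,j}$ and $\by=e_{p,i}$, the vector $e_{q,j}\otimes e_{p,i}$ is the canonical basis vector of $\R^{pq}$ supported in coordinate $(j-1)p+i$, and as $(i,j)$ ranges over $\dbra{p}\times\dbra{q}$ one obtains all of $e_{pq,1},\dots,e_{pq,pq}$. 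Hence $B$ kills a basis of $\R^{pq}$, forcing $B=0$ and $A=K_{p,q}$.

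I do not anticipate a genuine obstacle. The only point needing care is the index bookkeeping in the sufficiency step: lining up the block pattern of $\bx\otimes\by$ (the Kronecker product of a $q$-vector with a $p$-vector) with the columnwise $\vecc$ of a $p\times q$ matrix, and likewise for $X^{\top}$, so that the appeal to Theorem~\ref{th1} is literally correct and the spanning argument uses precisely the index $(j-1)p+i$.
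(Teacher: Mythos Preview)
Your proof is correct. For sufficiency, your vectorization argument via Theorem~\ref{th1} is exactly what the paper presents as an \emph{alternative} proof immediately after the theorem; the paper's primary proof instead expands $K_{p,q}(\bx\otimes\by)$ directly using the rank-one decomposition~(\ref{eq: decompK}) of Lemma~\ref{le2.1} together with Lemma~\ref{le2}, which you mention only in passing. For necessity, the paper proceeds more concretely: it specializes $\bx=e_{q,j}$ and reads off from the block form of $A(\bx\otimes\by)=\by\otimes\bx$ that each block satisfies $A_{ij}\by=y_{i}e_{j}=E_{ji}\by$ for all $\by\in\R^{p}$, whence $A_{ij}=E_{ji}$ and so $A=K_{p,q}$ by definition. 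Your subtraction-and-spanning argument (set $B=A-K_{p,q}$ and note that the simple tensors $e_{q,j}\otimes e_{p,i}$ form the canonical basis of $\R^{pq}$) is shorter and avoids the block bookkeeping; the paper's route has the minor advantage of not invoking the sufficiency direction and of recovering the block description of $K_{p,q}$ from scratch.
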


\begin{proof} 
We first prove the sufficiency.  Let $A=K_{p,q}$. Then for any $\bx\in \R^{q}, \by\in \R^{p}$, by Lemma \ref{le2.1} we have 
\beyy\label{eq: Kbasicprf01} 
K_{p,q} (\bx\otimes \by) &=& (\sum\limits_{i,j} (e_{p,i}\otimes e_{q,j})(e_{q,j}\otimes e_{p,i})^{\top})(\bx\otimes \by)\\
                                      &=& (\sum\limits_{i,j} (e_{q,j}^{\top}\bx) \otimes (e_{p,i}^{\top}\by) (e_{p,i}\otimes e_{q,j})\\ 
                                      &=& (\sum\limits_{i,j} x_{j}y_{i} e_{p,i}\otimes e_{q,j}\\
                                      &=& \by\otimes \bx.  
\eeyy
The third equality is due to Lemma \ref{le2}.  \\
\indent Conversely, we suppose condition (\ref{eq:Kbasic}) holds.  We want to show that $A=K_{p,q}$. For each $i\in \dbra{p}, j \dbra{q}$, we let 
$\bx=e_{j}\in \R^{q}$ be the $j$th coordinate vector of $\R^{q}$.  By the blocking product of $A(\bx\otimes \by)$ and (\ref{eq:Kbasic}), we have 
\beq\label{eq: th2.2prf01} 
A_{ij}\by = y_{i} e_{j}, \quad \forall i,j\in\dbra{p}, \quad  \forall \by\in \R^{p}
\eeq
Denote $E_{ij}\in \R^{q\times p}$ for the fundamental matrix, i.e., all of whose entries are zero except the $(i,j)$ entry which is 1.  
Then we have 
\beq\label{eq: th2.2prf02}
E_{ji}\by =y_{i}e_{j}, \quad  \forall i\in\dbra{p}, j\in \dbra{q}
\eeq
where $\by=(y_{1},y_{2},\ldots, y_{p})^{\top}$.  Thus we have from (\ref{eq: th2.2prf01}) and (\ref{eq: th2.2prf02}) that 
\beq\label{eqK6}
(A_{ij}-E_{ji}) \by = 0, \quad   \forall \by\in \R^{p}
\eeq   
It follows that $A_{ij}=E_{ji}$ for all $i,j$. Consequently we have $A=K_{p, q}$ by the definition. 
\end{proof}

\indent An alternative proof to Theorem \ref{th2.2} is to employ Theorem \ref{th1}:  we denote $X=\bx\by^{\top}\in \R^{p\times q}$. Then 
\[ \bx\otimes \by =\vecc(X), \quad  \by\otimes \bx =\vecc(X^{\top}) =\vecc(\by\bx^{\top}) =\by\otimes \bx \]
By Theorem \ref{th1}, we have 
\[  K_{p,q} (\bx\otimes \by) =K_{p,q} \vecc(X) = \vecc(X^{\top}) = \by\otimes \bx. \]
  
\indent Since $\R^{pq}$ is isometric to $\R^{p}\times \R^{q}$, which is also isometric to $\R^{q}\times \R^{p}$, $K_{p,q}$ can be regarded as 
a \emph{block permutation} on $\R^{pq}$. Consequently it can be regarded as an automorphism on $\R^{n\times n}$ where $n=pq$. The following 
result, which is an improvement of a known property for the commutation matrices, enhances this point.    

\begin{cor}\label{cor2.3}  
Let $A\in \R^{p\times p}, B\in \R^{q\times q}$ and let $n=pq$ where $p,q$ are positive integers.  Then we have  
\beq\label{eq:comm4kron}
A\otimes B =K_{p,q}(B\otimes A)K_{p,q}   
\eeq
Furthermore, if $p=q$,  then $A\otimes B$ is permutation similar to $B\otimes A$.  
\end{cor}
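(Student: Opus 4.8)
The plan is to verify (\ref{eq:comm4kron}) by evaluating both sides on the decomposable vectors $\bu\otimes\bv$ with $\bu\in\R^{p}$ and $\bv\in\R^{q}$, which span $\R^{pq}$, so two $pq\times pq$ matrices agreeing on all of them must coincide. First I would fix the bracketing: by Lemma \ref{le1}(1) we have $K_{q,p}=K_{p,q}^{-1}=K_{p,q}^{\top}$, so (\ref{eq:comm4kron}) is in essence a conjugation, and the form I would actually prove is $A\otimes B=K_{p,q}(B\otimes A)K_{q,p}$, reading the rightmost factor as $K_{p,q}^{-1}$ (the two readings agree precisely when $p=q$, where $K_{p,p}$ is an involution). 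I would then record the two ingredients: from Lemma \ref{le2}(1), $(A\otimes B)(\bu\otimes\bv)=(A\bu)\otimes(B\bv)$ and, symmetrically, $(B\otimes A)(\bv\otimes\bu)=(B\bv)\otimes(A\bu)$; and from Theorem \ref{th2.2}, used once as stated and once with $p$ and $q$ interchanged, $K_{q,p}(\bu\otimes\bv)=\bv\otimes\bu$ for $\bu\in\R^{p}$, $\bv\in\R^{q}$, and $K_{p,q}(\bw\otimes\bz)=\bz\otimes\bw$ for $\bw\in\R^{q}$, $\bz\in\R^{p}$.

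The heart of the proof is then a three-step chain starting from $\bu\otimes\bv$ with $\bu\in\R^{p}$, $\bv\in\R^{q}$: applying $K_{q,p}$ gives $\bv\otimes\bu$; applying $B\otimes A$, which as a $(q\times q)\otimes(p\times p)$ matrix acts on this $\R^{q}\otimes\R^{p}$ vector through Lemma \ref{le2}(1), gives $(B\bv)\otimes(A\bu)$; and applying $K_{p,q}$ to $(B\bv)\otimes(A\bu)$ — whose first factor $B\bv$ lies in $\R^{q}$ and whose second factor $A\bu$ lies in $\R^{p}$ — returns $(A\bu)\otimes(B\bv)$ by Theorem \ref{th2.2}. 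Since $(A\bu)\otimes(B\bv)=(A\otimes B)(\bu\otimes\bv)$, this closes the argument. For the last assertion, when $p=q$ both occurrences of the commutation matrix are the permutation matrix $K_{p,p}$ with $K_{p,p}K_{p,p}=I_{p^{2}}$ by Lemma \ref{le1}(1), so (\ref{eq:comm4kron}) displays $A\otimes B$ as permutation similar to $B\otimes A$; the conjugated form makes clear that this similarity in fact persists for all $p,q$.

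The only real point of care — indeed essentially the whole content — is the bookkeeping of which tensor factor sits in $\R^{p}$ and which in $\R^{q}$ at each stage, so that each use of Theorem \ref{th2.2} is applied to a vector of the exact shape it requires, namely $\bx\otimes\by$ with $\bx\in\R^{q}$ and $\by\in\R^{p}$; this constraint is precisely what forces the asymmetric pattern $K_{p,q}(\,\cdot\,)K_{q,p}$ rather than $K_{p,q}(\,\cdot\,)K_{p,q}$. As a safeguard I would check the chain on a small explicit instance, say $p=2$, $q=3$ with $A$, $B$ diagonal and the matrix $K_{2,3}$ of (\ref{ex:01}).

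Finally, I would mention a parallel derivation mirroring the \emph{alternative proof} already given for Theorem \ref{th2.2}, which avoids decomposable tensors: apply both sides of (\ref{eq:comm4kron}) to $\vecc(X)$ with $X\in\R^{q\times p}$. By Lemma \ref{le3} the left side equals $\vecc(BXA^{\top})$, while on the right Theorem \ref{th1} (read in the appropriate size) turns $K_{q,p}$ into the transpose map $X\mapsto X^{\top}$, Lemma \ref{le3} turns $B\otimes A$ into $Y\mapsto AYB^{\top}$, and Theorem \ref{th1} turns the outer $K_{p,q}$ into a further transpose; thus $X$ is carried to $X^{\top}$, then to $AX^{\top}B^{\top}=(BXA^{\top})^{\top}$, then back to $BXA^{\top}$, i.e.\ to the same vector. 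This route also makes transparent why the two conjugating factors are $K_{p,q}$ and $K_{q,p}$.
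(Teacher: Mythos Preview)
Your proof is correct and follows essentially the same route as the paper: both test the identity on decomposable vectors using Theorem~\ref{th2.2} together with Lemma~\ref{le2}(1), the paper first rewriting (\ref{eq:comm4kron}) as $K_{q,p}(A\otimes B)=(B\otimes A)K_{q,p}$ and then evaluating on $\bx\otimes\by$, whereas you evaluate $K_{p,q}(B\otimes A)K_{q,p}(\bu\otimes\bv)$ directly in three steps. Your remark that the rightmost factor should be read as $K_{q,p}=K_{p,q}^{-1}$ is well taken and is exactly what the paper's own chain establishes.
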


\begin{proof}
Since $K_{p,q}^{\top} = K_{q,p}$ is an orthogonal matrix by the first item of Lemma \ref{le1}, (\ref{eq:comm4kron}) can be equivalently written as 
\beq\label{eq:comm4kron2}
K_{q,p}(A\otimes B) = (B\otimes A)K_{q,p}   
\eeq
For any vector $\bx\in \R^{p}, \by\in \R^{q}$, we have, by Theorem \ref{th2.2},  
\beyy\label{eq: Kthm23left} 
K_{q,p}(A\otimes B)(\bx\otimes \by) &=& K_{q,p} (A\bx\otimes B\by)\\ 
                                                         &=& B\by\otimes A\bx\\   
                                                         &=& (B\otimes A)(\by\otimes\bx)\\
                                                         &=& (B\otimes A)K_{q,p}(\bx\otimes \by)
\eeyy
Thus (\ref{eq:comm4kron2}) holds.\\  
\indent Now if $p=q$, then the commutation matrix $K_{p}=K_{p,p}=K_{p}^{\top}$ by Lemma \ref{le1}. Thus by (\ref{eq:comm4kron}) we know that $A\otimes B$ is permutation similar to $B\otimes A$.  
\end{proof}

\indent For $p=q$, we denote $n=p^{2}$ and $K_{p}:=K_{p,p}$. Then we have 
\begin{thm}\label{th: 2.4}
\begin{enumerate}
\item $K_{p}$ is a symmetric involution, i.e.,  $K_{p}^{2}=I_{n}$. 
\item $\tr(K_{p})=p$. 
\item $\det(K_{p})=(-1)^{\frac{p(p-1)}{2}}$ for any integer $p>1$. 
\end{enumerate}
\end{thm}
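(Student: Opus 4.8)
The plan is to establish the three items in order, since each is essentially self-contained once the structural description of $K_p$ is in hand. For item (1), I would simply invoke the first item of Lemma~\ref{le1} with $p=q$: there $K_{p,q}^{\top}=K_{q,p}$ becomes $K_p^{\top}=K_p$, so $K_p$ is symmetric, and $K_{p,q}K_{q,p}=I_{pq}$ becomes $K_p^2=I_n$. Alternatively one can read both facts off Theorem~\ref{th2.2}: applying the defining relation $K_p(\bx\otimes\by)=\by\otimes\bx$ twice gives $K_p^2(\bx\otimes\by)=\bx\otimes\by$ for all $\bx,\by\in\R^p$, and since the pure tensors $\bx\otimes\by$ span $\R^n$, $K_p^2=I_n$; symmetry then follows because $K_p^{\top}=K_p^{-1}=K_p$ (an orthogonal involution that equals its own inverse is symmetric). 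I would use the Lemma~\ref{le1} route as the primary one and mention the tensor argument as a remark.

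For item (2), I would count fixed points of the underlying permutation. By Theorem~\ref{th1} (with $p=q$), $K_p$ sends $\vecc(X)$ to $\vecc(X^{\top})$, so as a permutation matrix its trace equals the number of coordinates fixed by the transposition map on $\R^{p\times p}$, i.e. the number of index pairs $(i,j)$ with the $(i,j)$ entry and the $(j,i)$ entry occupying the same slot — precisely the diagonal pairs $i=j$, of which there are $p$. Equivalently, using the rank-one decomposition in Lemma~\ref{le2.1}, $\tr(K_p)=\sum_{i,j}\tr\bigl((e_{p,i}\otimes e_{p,j})(e_{p,j}\otimes e_{p,i})^{\top}\bigr)=\sum_{i,j}(e_{p,j}\otimes e_{p,i})^{\top}(e_{p,i}\otimes e_{p,j})=\sum_{i,j}\delta_{ij}=p$, where the middle step uses $(\ba\otimes\bb)^{\top}(\bc\otimes\bd)=(\ba^{\top}\bc)(\bb^{\top}\bd)$ from Lemma~\ref{le2}. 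This is routine and I would present the short computation.

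Item (3) is the one carrying real content, and it is identical to the third item of Lemma~\ref{le1}, whose proof was deferred in the excerpt; so here is where the actual work lies. The cleanest approach is to compute the sign of the transposition permutation $\pi$ on $\dbra{p}\times\dbra{p}$ defined by $\pi(i,j)=(j,i)$. This permutation fixes the $p$ diagonal pairs and swaps the two members of each of the $\binom{p}{2}$ unordered off-diagonal pairs $\{(i,j),(j,i)\}$ with $i\neq j$; hence it is a product of $\binom{p}{2}=\frac{p(p-1)}{2}$ disjoint transpositions, so its sign is $(-1)^{p(p-1)/2}$, and $\det(K_p)=\mathrm{sgn}(\pi)=(-1)^{p(p-1)/2}$. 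The only subtlety — and the main obstacle to watch — is making sure the labeling of rows/columns of $K_p$ by pairs $(i,j)$ via the formula \eqref{eq: 4to2} is consistent for rows and columns, so that $K_p$ really is the permutation matrix of $\pi$ rather than of $\pi$ composed with a relabeling; but since the same index-to-slot correspondence $s=(i-1)q+k$ governs both indices when $p=q$, this is fine. An alternative, should one prefer to avoid permutation bookkeeping, is spectral: by item (1), $K_p$ is a symmetric involution, so its eigenvalues are $\pm1$; by item (2) the multiplicities $m_+$ and $m_-$ satisfy $m_+ + m_- = p^2$ and $m_+ - m_- = p$, giving $m_- = \frac{p^2-p}{2} = \frac{p(p-1)}{2}$, whence $\det(K_p) = (-1)^{m_-} = (-1)^{p(p-1)/2}$. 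I would actually favor this spectral argument, since it reuses items (1) and (2) and sidesteps the index juggling entirely, and I would note in passing that it also completes the postponed proof of Lemma~\ref{le1}(3) via $\det(K_{p,p})=\det(K_p)$.
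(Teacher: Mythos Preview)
Your proposal is correct and aligns with the paper's own proof: item~(1) via Lemma~\ref{le1}, item~(2) by reading off the diagonal contribution (the paper phrases it as $\tr(K_p)=\sum_{i=1}^{p}\tr(E_{ii})=p$, which is your block/rank-one computation in different clothing), and item~(3) by exactly the spectral argument you favor, using that $K_p$ is a symmetric involution with trace $p$ to extract the multiplicity of $-1$ as $\frac{p(p-1)}{2}$. Your alternative permutation-sign argument for~(3) is a valid and slightly more elementary route that the paper does not take, but your stated preference matches the paper's choice.
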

 
\begin{proof} 
\begin{description}
  \item[(1). ]  The symmetry of $K_{p}$ follows directly from  (1) of Lemma \ref{le1}, and (1-2) of Lemma  \ref{le1} yields the equation $K_{p}^{2}=I_{n}$.
  \item[(2). ]  This is obvious since $\tr(K_{p})=\sum\limits_{i=1}^{p} \tr(E_{ii})=p$ where $E_{ii}\in \R^{p\times p}$ whose entries are all zeros except the $(i,i)$-entry 
  that is 1.    
  \item[(3). ] It is immediate from (1) that $\abs{\det(K_{p})}=1$.  By (1), $K_{p}$ is an orthogonal projection, thus $K_{p}$ can be decomposed as $K_{p}=UDU^{\top}$,  
where $U\in \R^{n\times n}$ is an orthogonal matrix and $D=diag(I_{r}, -I_{n-r})$. By (2), we have $p =\tr(K_{p}) = r-(n-r) = 2r-n$. It follows that $r=\frac{1}{2}p(p+1)$. 
Thus we have 
\[ \det(K_{p}) =(-1)^{ p^{2}-r} = (-1)^{\frac{1}{2}p(p-1)} \]
Consequently we obtain (3). 
\end{description}
\end{proof}

\vskip 5pt
\section{Commutation Tensors}
\setcounter{equation}{0}

\indent  In this section, we define the commutation tensor and investigate its properties. We use the commutation tensor to obtain an unified form of the linear rank preserver. \\

\indent  Recall that an $m$-order tensor $\A=(A_{i_{1}i_{2}\ldots i_{m}})$ of size $N_{1}\times N_{2}\times \ldots \times N_{m}$ can be regarded 
as an $m$-way array where the subscripts $(i_{1},i_{2},\ldots, i_{m})$ is taken from the set 
\[ S(m,n):=\set{\si=(i_{1},i_{2}, \ldots, i_{m}): i_{k}\in\dbra{N_{k}},  k=1,2,\ldots, m. }  \]
For $N_{1}=N_{2}=\ldots =N_{m}=N$, we call $\A$ an $(m,n)$-tensor, and denote $\T_{m,n}$ for the set of all \mnrts.  
An $(m,n)$-tensor $\A$ is called symmetric if for any $\si=(i_{1},i_{2}, \ldots, i_{m})\in S(m,n)$, we have $A_{i_{1}i_{2}\ldots i_{m}}=A_{j_{1}j_{2}\ldots j_{m}}$ 
where $(j_{1},j_{2}, \ldots, j_{m})\in S(m,n)$ is a permutation of $\si$.  We denote $\ST_{m,n}$ for the set of all \mnsts. \\

\indent Given any vector $\bx=(x_{1},x_{2},\ldots, x_{n})^{\top}\in \R^{n}$. We generate a rank-1 $(m, n)$-tensor $\X=(X_{i_{1}i_{2}\ldots x_{m}}):=\bx^{m}$ 
where 
\[ X_{i_{1}i_{2}\ldots x_{m}} = x_{i_{1}}x_{i_{2}}\ldots x_{i_{m}}  \]
More generally, a rank-1 $m$-order tensor is in form $\al_{1}\times \al_{2}\times \ldots \times \al_{m}$ where $\al_{k}\in \R^{N_{k}}$ for $k\in \dbra{m}$. 
An $(m,n)$-tensor $\A = (A_{i_1i_2\ldots i_m})$ is called \emph{positive semidefinite} if  for each $x=(x_1,x_2,\ldots,x_n)^T\in \R^n$  
\beq\label{def: psd}
f(\bx):=\sum\limits_{i_1,i_2,\ldots,i_m} A_{i_1i_2\ldots i_m}x_{i_1}x_{i_2}\ldots x_{i_m} \ge 0
\eeq 
and called \emph{positive definite} if  $f(\bx)>0$ for all $\bx\neq 0$.  It is easy to see that a nonzero positive (semi-)definite tensor must be of an 
even order.\\ 
\indent Let $\A = (A_{i_1i_2\ldots i_m})$ be a tensor of size $I_{1}\times I_{2}\times \ldots \times I_{m}$ and $B\in \R^{I_{n}\times J_{n}}$, $\bx\in \R^{I_{n}}$. 
We define the tensor-vector multiplication along mode-$n$ by 
\[ \A\times_{n} \bx :=\sum\limits_{i_n=1}^{I_{n}} A_{i_1\ldots i_n\ldots i_m}x_{i_n} \]
which produces a $(m-1)$-order tensor.  This definition can also be extended to the multiplication of any two tensors with some consistent dimensions. 
For example, a tensor-matrix multiplication $\A\times_{3,4}B$ along mode-$\set{3,4}$ is defined as
\beq\label{eq: tmprod} 
(\A\times_{3,4}B)_{ij}=\sum\limits_{k,l} A_{ijkl}b_{kl}
\eeq
where $\A\in \R^{m\times n\times p\times q}, B\in \R^{p\times q}$. Then $\A\times_{3,4}B\in \R^{m\times n}$.  As a matrix can be vectorised into a vector, a 
tensor can be flattened or unfolded into a matrix \cite{kolda2009}. \\ 
 
\indent  Let $\A\in \T_{m,n}$ and $\bx=(x_{1},\ldots, x_{n})^{\top}\in \C^{n}$ be a nonzero vector.  $\bx$ is called an eigenvector of $\A$ if there exists a scalar 
$\la\in \C$ such that  
\[ \A\bx^{m-1} =\la\bx^{m-1} \]
If $\bx\in \R^{n}$, then  $\la\in \R$. We call such a $\la$ an H-eigenvalue of $\A$ and $\bx$ the eigenvector of $\A$ corresponding  to $\la$. \\ 

\indent Given any positive integer $m, n$. We define the $(m,n)$-\emph{commutation tensor} $\K_{n,m}=(K_{ijkl})$ to be an (0,1) $n\times m\times m\times n$ tensor 
where  $K_{ijkl}=1$ if and only if $i=l, j=k$ for all $1\le i, l \le n, 1\le j,k\le m$.  Note that $\K_{n,m}$ can be flattened into the commutation matrix $K_{m,n}$ and that 
$K(i,j,:,:)=H_{ij}\in \R^{m\times n}$ for all $1\le i\le n, 1\le j\le m$.  $\K_{m,n}$ is also called a permutation tensor.   
\begin{exm}\label{exm301}
Consider $\K_{2,3}=(K_{ijkl})$ with size $2\times 3\times 3\times 2$. Then $\K$ has six nonzero entries as 
\[ K_{1111}=K_{2112}=K_{1221}= K_{2222}=K_{1331}=K_{2332}=1 . \]  
Given any matrix $X=(x_{ij})\in \R^{3\times 2}$.   It is easy to see that $\K\times_{3,4}X =X^{\top}$. 
\end{exm}

\indent An  even-order tensor $A=(A_{i_{1}\ldots i_{m}j_{1}\ldots j_{m}})$ is called \emph{pair-symmetric} if  
\beq
A_{i_{\tau(1)} \ldots i_{\tau(m)} j_{\tau(1)}\ldots j_{\tau(m)}} = A_{i_{1}\ldots i_{m}j_{1}\ldots j_{m}} 
\eeq 
where $\tau\in S_{m}$ is an arbitrary permutation. Pair-symmetric tensors have applications in elastic physics\cite{HQ2018}. Obvious that  $\K=\K_{n,n}$ is pair-symmetric. \\
\indent Now we can establish a multi-linear relationship between a matrix and its transpose through the commutation tensor.   

\begin{thm}\label{th:ctensor02}
Let $\K=\K_{m,n}$ be an $(m,n)$-commutation tensor. Then $\K\times_{3,4} X=X^{\top}$ for any $m\times n$ matrix $X$. 
\end{thm}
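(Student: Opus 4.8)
The plan is to reduce the statement to a one-line index computation, which is exactly the version of Example~\ref{exm301} for general $m$ and $n$. Set $Y:=\K\times_{3,4}X$. By the definition~(\ref{eq: tmprod}) of the mode-$\{3,4\}$ tensor--matrix product, for each admissible pair $(i,j)$ one has $Y_{ij}=\sum_{k,l}K_{ijkl}\,x_{kl}$, where the sum runs over the ranges of the third and fourth modes of $\K$. By the definition of the commutation tensor, $K_{ijkl}=1$ exactly when $j=k$ and $i=l$, and $K_{ijkl}=0$ otherwise; hence in the double sum every term vanishes except the one with $k=j$ and $l=i$, giving $Y_{ij}=x_{ji}=(X^{\top})_{ij}$. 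Therefore $Y=X^{\top}$, as claimed.

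Before running this computation I would first pin down the sizes: I would record that $\K=\K_{m,n}$ is an $m\times n\times n\times m$ array, so that its third and fourth modes have exactly the dimensions of the two modes of $X$, the product $\K\times_{3,4}X$ is well defined, and the surviving term $k=j$, $l=i$ indeed lies in range. Once that is in place the argument above applies verbatim, and the pair-symmetry of $\K$ plays no role.

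An alternative route is to appeal to Theorem~\ref{th1}. Flattening $\K$ along the split $\{1,2\}\mid\{3,4\}$ yields the commutation matrix $K_{p,q}$, and under the matching (row- or column-wise) matricisation of $X$ the operation $\K\times_{3,4}X$ becomes the matrix--vector product $K_{p,q}\,\vecc(X)$; Theorem~\ref{th1} then identifies this with $\vecc(X^{\top})$, and reshaping back gives the conclusion. I would present the direct entrywise computation as the proof and mention the flattening identity only as a remark, since verifying that the flattening intertwines $\K\times_{3,4}(\cdot)$ with $K_{p,q}\,\vecc(\cdot)$ requires tracking the very same index conventions and buys nothing.

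There is no genuine mathematical obstacle here; the only thing demanding attention is notational discipline. The paper writes $\K_{n,m}$ for an $n\times m\times m\times n$ tensor but uses the subscripts $\K_{m,n}$ in the statement, so one must fix once and for all which member of the pair indexes which mode and which matrix shape is being contracted against. With that convention fixed, the proof is precisely the collapse of the double sum to its single surviving term described above.
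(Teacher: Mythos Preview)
Your proposal is correct and follows essentially the same approach as the paper: the paper's proof is exactly the one-line index computation $B_{ij}=\sum_{k,l}K_{ijkl}X_{kl}=K_{ijji}X_{ji}=X_{ji}$, i.e., the collapse of the double sum to its single surviving term that you describe. Your additional remarks on dimension checking and the alternative flattening route are sound but go beyond what the paper provides.
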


\begin{proof}
Denote $B:=\K\times_{\set{3,4}} X =(B_{ij})$. Then $B\in \R^{m\times n}$. By definition (\ref{eq: tmprod}) we have for each $i\in \dbra{m}, j\in [n]$
\[ B_{ij} = \sum\limits_{k, l} K_{ijkl}X_{kl} = K_{ijji}X_{ji} = X_{ji} \]
Thus $B=X^{\top}$. 
\end{proof} 
\indent  We denote the multiplication $\A\times_{\set{3,4}} B$ simply by $\A B$ in the following for our convenience. The definition can also be extended to the case 
for any two tensors $\A,\B$ of order $2m$, where $\A=(A_{i_{1}\ldots i_{m}, j_{1}\ldots j_{m}}), \B=(B_{i_{1}\ldots i_{m}, j_{1}\ldots j_{m}})$ are consistent, i.e., 
\beq\label{eq:defprodin2pn}
(\A\B)_{i_{1}\ldots i_{m}, j_{1}\ldots j_{m}} = \sum\limits_{k_{1},\ldots,k_{m}} A_{i_{1}\ldots i_{m}, k_{1}\ldots k_{m}} B_{k_{1}\ldots k_{m}, j_{1}\ldots j_{m}} 
\eeq
or to the case where $\A$ is of order $2m$ and $\B$ is of order $m$ in a similar way, resulting in an $m$-order tensor.\\
\indent  Let $\A,\B\in \T_{2m,n}$ whose multiplication is defined by (\ref{eq:defprodin2pn}). Then $\T_{2m,n}$ is closed under the multiplication.  Furthermore  
\begin{lem}\label{le:t4assoc}
$\T_{2m,n}$ obeys an associative law under the multiplication defined by (\ref{eq:defprodin2pn}), i.e.,  
\beq\label{eq:assocT2pn}
(\A\times \B)\times \C =  \A\times (\B\times \C)
\eeq 
\end{lem}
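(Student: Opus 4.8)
The plan is to prove associativity by the standard index-chasing argument for the multiplication defined in (\ref{eq:defprodin2pn}): both sides are $2m$-order tensors indexed by $(i_1\ldots i_m, j_1\ldots j_m)$, so it suffices to show that the two iterated sums agree entrywise. First I would write out the left-hand side: by applying (\ref{eq:defprodin2pn}) to $\A\times\B$ and then again to $(\A\times\B)\times\C$, the $(i_1\ldots i_m, j_1\ldots j_m)$-entry becomes a double sum over intermediate multi-indices $(k_1\ldots k_m)$ and $(l_1\ldots l_m)$ of the product $A_{i_1\ldots i_m, k_1\ldots k_m}\, B_{k_1\ldots k_m, l_1\ldots l_m}\, C_{l_1\ldots l_m, j_1\ldots j_m}$. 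Then I would expand the right-hand side in the same way, first forming $\B\times\C$ and then multiplying $\A$ on the left, obtaining the same double sum but with the summations nested in the opposite order.

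The key step is then to invoke the fact that finite sums commute (Fubini for finite sums), so the two nested double sums over $(k_1,\ldots,k_m)$ and $(l_1,\ldots,l_m)$ — each ranging independently over $\dbra{n}^m$ — are equal. Since every entry of $(\A\times\B)\times\C$ equals the corresponding entry of $\A\times(\B\times\C)$, the two tensors coincide, which is exactly (\ref{eq:assocT2pn}). I would also remark briefly that closure of $\T_{2m,n}$ under this multiplication, already noted in the text preceding the lemma, guarantees that both triple products are well-defined elements of $\T_{2m,n}$, so the identity makes sense.

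The only mild bookkeeping obstacle is notational: keeping the three layers of multi-indices $(i_\bullet)$, $(k_\bullet)$, $(l_\bullet)$, $(j_\bullet)$ straight while writing the double sum, and making sure the intermediate index introduced when contracting $\A\times\B$ on the left is distinct from the one introduced when contracting $\B\times\C$ on the right. There is no genuine mathematical difficulty here — it is the same proof as associativity of ordinary matrix multiplication, with each scalar index replaced by an $m$-tuple — so I would present the entry-wise computation compactly, perhaps using an abbreviation such as writing $I=(i_1,\ldots,i_m)$, $K=(k_1,\ldots,k_m)$, etc., and summations $\sum_K$ meaning $\sum_{k_1,\ldots,k_m}$, to keep the display on a single line and avoid the paragraph break that an awkward multi-line expansion might force.
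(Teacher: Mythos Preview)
Your proposal is correct and follows essentially the same approach as the paper: an entrywise expansion of both sides into a double sum over two intermediate multi-indices, followed by interchanging the order of the finite sums. The paper even adopts the same notational shortcut you suggest, writing $\si^{(i)}:=(i_1,\ldots,i_m)$ for multi-indices to keep the computation compact.
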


\begin{proof}
Denote $\F =\A\times \B=(F_{i_{1}\ldots i_{m}, j_{1}\ldots j_{m}}), \caH =\B\times \C=(H_{i_{1}\ldots i_{m}, j_{1}\ldots j_{m}})$,  and 
$\G=\F\times \C =(G_{i_{1}\ldots i_{m};j_{1}\ldots j_{m}})$. For convenience, we denote $\si^{(i)}:=(i_{1}\ldots i_{m})$. Thus 
\[(\si^{(i)},\si^{(j)}) = (i_{1}\ldots i_{m}, j_{1}\ldots j_{m})\in S(2m, n)\]
and $\F =\A\times \B=(F_{\si^{(i)}, \si^{(j)}})$. We have by definition  
\beyy 
 G_{\si^{(i)}, \si^{(j)}}  &=& \sum\limits_{\si^{(k)}} F_{\si^{(i)},\si^{(k)}} C_{\si^{(k)}, \si^{(j)}}\\
              &=& \sum\limits_{\si^{(k)}} (\sum\limits_{\si^{(l)}} A_{\si^{(i)}, \si^{(l)}} B_{\si^{(l)},\si^{(k)}}) C_{\si^{(k)},\si^{(j)}} \\
              &=& \sum\limits_{\si^{(l)}} A_{\si^{(i)}, \si^{(l)}} (\sum\limits_{\si^{(k)}} B_{\si^{(l)},\si^{(k)}} C_{\si^{(k)},\si^{(j)}}) \\
              &=& \sum\limits_{\si^{(l)}} A_{\si^{(i)}, \si^{(l)}} H_{\si^{(l)}, \si^{(j)}} 
\eeyy
The right hand side of the last equality is exactly the entry $[\A\times (\B\times \C)]_{\si^{(i)}, \si^{(j)}}$. Thus we complete the proof of  (\ref{eq:assocT2pn}). 
\end{proof}

\indent The equality (\ref{eq:assocT2pn}) in Lemma \ref{le:t4assoc} holds for any $2m$-order tensors $\A, \B, \C$ whenever the multiplications in (\ref{eq:assocT2pn}) 
make sense. Now we denote $\K=\K_{n,n}$ when no risk of confusion arises for $n$, and $\K^{2}=\K\times \K$ is called the \emph{square of} $\K$.  We may also define 
any power $\K^{m}$ recursively due to Lemma \ref{le:t4assoc} for any positive integer $m$, i.e.,  $\K^{m}=\K^{m-1}\K$.  Note that the definition of a tensor power can be
extended to any even-order tensor. 

\begin{thm}\label{th:ctensor03}
\begin{enumerate}
\item $\K^{m}=\K$ for any odd number $m$. 
\item $\K^{m}=\K^{2}$ for any positive even number $m$. 
\end{enumerate}
\end{thm}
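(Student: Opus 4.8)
The whole statement will follow from one computation: that $\K^{2}$ is the multiplicative identity of the algebra $(\T_{4,n},\times)$. I would first evaluate, directly from the definition (\ref{eq:defprodin2pn}) together with the fact that $K_{ijkl}=1$ precisely when $i=l$ and $j=k$,
\[
 (\K^{2})_{ijkl}=\sum_{p,q}K_{ijpq}K_{pqkl}=\sum_{p,q}\delta_{iq}\delta_{jp}\,\delta_{pl}\delta_{qk}=\delta_{ik}\,\delta_{jl},
\]
so that $\K^{2}=\caI$, where $\caI:=(\delta_{ik}\delta_{jl})$. A one-line check from (\ref{eq:defprodin2pn}) then shows $\caI$ is a two-sided identity: $(\caI\,\A)_{ijkl}=A_{ijkl}=(\A\,\caI)_{ijkl}$ for every $\A\in\T_{4,n}$, and likewise $\caI\times_{3,4}X=X$ for every $X\in\R^{n\times n}$.

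Given this, parts (1) and (2) drop out by a short induction that uses only the associativity established in Lemma \ref{le:t4assoc}. One first records $\K^{2s}=(\K^{2})^{s}$ and $\K^{2s+1}=\K^{2s}\times\K$, both immediate by induction on $s$. Then for even $m=2s$ with $s\ge 1$ one gets $\K^{m}=(\K^{2})^{s}=\caI^{s}=\caI=\K^{2}$, and for odd $m=2s+1$ one gets $\K^{m}=(\K^{2})^{s}\times\K=\caI\times\K=\K$, the case $s=0$ being the trivial identity $\K^{1}=\K$.

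The reason the computation works is best seen operator-theoretically, and this gives an alternative proof. Define $\Phi:\T_{4,n}\to\mathrm{End}(\R^{n\times n})$ by $\Phi(\A)X:=\A\times_{3,4}X$. By the extension of Lemma \ref{le:t4assoc} to the mixed (order $4$ times order $2$) product noted right after its proof, $\Phi$ is an algebra homomorphism, $\Phi(\A\,\B)=\Phi(\A)\Phi(\B)$; and $\Phi$ is injective, since $\A\times_{3,4}X=0$ for all $X$ forces $A_{ijkl}=(\A\times_{3,4}E_{kl})_{ij}=0$. By Theorem \ref{th:ctensor02} (with $m=n$), $\Phi(\K)$ is the transpose map $T$, which is an involution, $T^{2}=\mathrm{id}$. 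Hence $\Phi(\K^{m})=T^{m}$ equals $T=\Phi(\K)$ for odd $m$ and equals $\mathrm{id}=\Phi(\K^{2})$ for even $m$, and injectivity of $\Phi$ closes both parts. Neither route hides a genuine obstacle; the only point demanding care is the index bookkeeping forced by the "reversal" pattern of $\K$ — in particular, remembering that $\K$ is \emph{not} idempotent (indeed $\K\neq\caI$ as soon as $n>1$), so that the fixed point governing the recursion is $\K^{2}=\caI$ rather than $\K$ itself.
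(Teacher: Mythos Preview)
Your direct computation is essentially the paper's own argument: both compute $(\K^{2})_{ijkl}$ and find it equals $1$ exactly when $i=k$ and $j=l$, then use this together with associativity (Lemma \ref{le:t4assoc}) to reduce all higher powers. The only cosmetic difference is that you name $\K^{2}=\caI$ as the two-sided identity at the outset and let the induction fall out in one line, whereas the paper first verifies $\K^{3}=\K$ by the same index calculation and then recurses; your phrasing is a little cleaner but the content is identical.

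Your second, operator-theoretic route via the faithful algebra homomorphism $\Phi:\T_{4,n}\to\mathrm{End}(\R^{n\times n})$, $\Phi(\A)X=\A\times_{3,4}X$, together with Theorem \ref{th:ctensor02} identifying $\Phi(\K)$ with the transpose involution, is a genuinely different argument not in the paper. It avoids any index computation beyond checking injectivity of $\Phi$ (via evaluation at the $E_{kl}$), and it makes transparent \emph{why} the parity dichotomy holds: it is inherited from $T^{2}=\mathrm{id}$ on $\R^{n\times n}$. The paper's approach is self-contained at the tensor-entry level and does not need the mixed-order associativity or Theorem \ref{th:ctensor02}; your alternative buys conceptual clarity and would generalise immediately to any $\A\in\T_{4,n}$ whose associated operator is an involution.
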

 
\begin{proof}
(1). For each $(i,j,k,l):  i,l\in \dbra{m}, j,k\in [n]$, we have by definition
\beq\label{eq: k2}
K^{2}_{ijkl}= \sum\limits_{i_{1}, j_{1}}  K_{iji_{1}j_{1}}K_{i_{1}j_{1}kl} = K_{ijji}K_{jikl} =K_{jikl}
\eeq
It follows that $K^{2}_{ijkl}=1$ if and only if $i=k, j=l$.  Thus 
\beq\label{eq: k3to1} 
\K^{3}_{ijkl} = \sum\limits_{k^{\p},l^{\p}}  K^{2}_{ijk^{\p}l^{\p}} K_{k^{\p}l^{\p}kl} = K^{2}_{ijij} K_{ijkl} 
\eeq
for each $(i,j,k,l):  i,l\in \dbra{m}, j,k\in [n]$, which implies $\K^{3}=\K$. Thus $\K^{m}=\K$ for any odd number $m$ if we apply recursively the fact (\ref{eq: k3to1}).\\
\indent To prove the second item, we note that from $\K^{3}=\K$ it follows that $\K^{4}=\K^{2}$ by the associative law (i.e., Lemma \ref{le:t4assoc}). Consequently 
for any even $m=2k$ ($k\ge 2$) we have by recursion that $\K^{m}=\K^{2}$.  We note that item (2) can also be proved as in the follows.
\beyy\label{eq: 2to4} 
K^{4}_{ijkl}  &=& \sum\limits_{i_{1}, j_{1}}  K^{2}_{iji_{1}j_{1}}K^{2}_{i_{1}j_{1}kl}\\
                   &=& \sum\limits_{i_{1}, j_{1}}  K_{j i i_{1}j_{1}}K_{j_{1}i_{1}kl} =K_{jikl} 
\eeyy
where the second equality is due to (\ref{eq: k2}). Thus we have $\K^{4}=\K$.  This in turn follows by $\K^{6}=\K^{2}\K^{4}=\K^{3}=\K$. Consequently, 
we get (2). 
\end{proof} 

 \indent  Now we let $\B=(B_{ijk})\in \R^{n\times n\times m}$ with $B(:,:,k)=B_{k}\in \R^{n\times n}$ corresponding to a permutation $\pi_{k}\in S_{n}$.
Define $\K^{\pi}:=(K_{i_{1}\ldots i_{m}; j_{1}\ldots j_{m}})=B_{1}\times B_{2}\times \ldots \times B_{m}$ with $\pi=(\pi_{1},\ldots, \pi_{m})$ and  
\[ K_{i_{1}\ldots i_{m}; j_{1}\ldots j_{m}} = B_{i_{1}j_{1} 1} B_{i_{2}j_{2} 2}\ldots B_{i_{m}j_{m} m}  \]
$\K^{\pi}$ is called a \emph{generalised commutation tensor associated with} $\pi$, or briefly a \textbf{$\pi$-GCT}.  By definition, we have  
 \beq\label{eq: defgct}
   K_{i_{1}\ldots i_{m};j_{1}\ldots j_{m}} =1 \  \ffi    j_{k} =i_{\tau(k)} , \quad  \forall k, (i_{1},\ldots, i_{m})\in S(m,n)
 \eeq
 This is equivalent to 
 \[  \K^{\si} = \overbrace{B\times B\times\ldots\times B}^{m} \]
where $B\in \R^{n\times n}$ is a permutation matrix corresponding to $\si$.  Note that a GCT $K$ becomes a commutation tensor if $m=2$.\\
 \indent  Let $\si\in S_{n}, \A\in \T_{m,n}$. We will show that tensor $\A\K^{\pi}$ (also $\K^{\pi}\A$) can be regarded as a permutation of $\A$ 
whose specific meaning is described in the following.  For this purpose, we define tensor $\A^{\si}=(\tA_{i_{1}\ldots i_{m}})$ by 
\[ \tA_{i_{1}i_{2}\ldots i_{m}} = A_{\si(i_{1}) \si(i_{2})\ldots \si(i_{m})}, \quad  \forall  (i_{1},\ldots, i_{m})\in S(m, n) \]
It is shown by Comon et al. \cite{cglm2008} that an $m$th order $n$ dimensional tensor $\A\in \T_{m,n}$ can always be decomposed into
\beq\label{eq: rk1decomp}
\A = \sum\limits_{j=1}^{R} \al_{1j}\times \al_{2j}\times \ldots \times \al_{mj}
\eeq
where each $\al_{ij}\in \R^{n}$ is nonzero. The formula (\ref{eq: rk1decomp}) is called a \emph{rank-1 decomposition} or a \emph{CP decomposition} of 
$\A$, the smallest possible number $R$ in (\ref{eq: rk1decomp})  is called the \emph{rank} of $\A$. Furthermore, $\A$ can be decomposed into form 
\beq\label{eq: symrk1decomp}
\A = \sum\limits_{j=1}^{R} \al_{j}^{m}
\eeq
if $\A$ is a symmetric tensor, where $0\neq \al_{j}\in \R^{n}$.  Given a tensor $\A\in \T_{m,n}$ and a matrix $B\in \R^{n\times n}$. We define the 
\emph{right complete product} of $\A$ by $B$, denoted $\A\cdot{}\dbra{B}$, by 
\[ \A\cdot{}\dbra{B}:= \A\times_{1}B\times_{2}B\times_{3}\ldots \times_{m}B.  \]
The \emph{complete left product} of $\A$ by $B$,denoted $\dbra{B}\cdot{}\A$, is defined analogically. Note that for any $k\in \dbra{m}$, we have 
\[ (\A\times_{k}B)_{i_{1}i_{2}\ldots i_{m}} = \sum\limits_{i=1}^{n} A_{i_{1}\ldots i_{k-1} i i_{k+1}\ldots i_{m}} B_{i i_{k}} \]
and 
\[ (B\times_{k}\A)_{i_{1}i_{2}\ldots i_{m}} = \sum\limits_{j=1}^{n} B_{i_{k} j} A_{i_{1}\ldots i_{k-1} j i_{k+1}\ldots i_{m}} \]  
\indent  Now suppose $\A$ has a CP decomposition (\ref{eq: rk1decomp}) ( $0\neq \al_{ij}\in \R^{n}$).  Then $\A^{\si}$ can also be written as 
\beq\label{eq: pirk1dec}
\A^{\si}: = \sum\limits_{j=1}^{R} \al_{\si(1),j}\times \al_{\si(2),j}\times \ldots \times \al_{\si(m),j}
\eeq
For $m=2$, $A^{\tau}$ is either $A$ itself (when $\tau$ is the identity map) or the transpose of $A$ (when $\tau=(12)$ is a swap). 

\begin{lem}\label{le: 403}
Let $\A\in \T_{m,n}$ be a real tensor with a CP decomposition (\ref{eq: rk1decomp}) and let $\tau\in S_{m}$. Then   
\beq\label{eq: K4Api}
\A^{\tau} = \A\times \K^{\tau}  
\eeq  
where $\K^{\tau}\in \T_{2m; n}$ is an $2m$-order GCT defined by (\ref{eq: defgct}).  
\end{lem}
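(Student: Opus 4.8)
The plan is to reduce to the rank-1 case by linearity and then check the identity entrywise. Both operations in (\ref{eq: K4Api}) are linear in $\A$: the map $\A\mapsto\A\times\K^{\tau}$ is, by the definition of the product of an order-$m$ tensor with an order-$2m$ tensor (the natural analogue of (\ref{eq:defprodin2pn})), a fixed linear combination of the entries of $\A$; and formula (\ref{eq: pirk1dec}) expands $\A^{\tau}$ termwise over any CP decomposition (\ref{eq: rk1decomp}) of $\A$. So, writing $\A=\sum_{j=1}^{R}\al_{1j}\times\cdots\times\al_{mj}$, I would first reduce to proving (\ref{eq: K4Api}) for a single rank-1 tensor $\A=\al_{1}\times\al_{2}\times\cdots\times\al_{m}$ with $\al_{k}\in\R^{n}$, and then sum over $j$.

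For such an $\A$, (\ref{eq: pirk1dec}) gives $\A^{\tau}=\al_{\tau(1)}\times\al_{\tau(2)}\times\cdots\times\al_{\tau(m)}$, whose $(j_{1},\ldots,j_{m})$-entry is $\prod_{k=1}^{m}\al_{\tau(k)}(j_{k})$. For the other side I would contract the $m$ indices of $\A$ against the first index group of $\K^{\tau}$,
\[ (\A\times\K^{\tau})_{j_{1}\ldots j_{m}}=\sum_{i_{1},\ldots,i_{m}}A_{i_{1}\ldots i_{m}}\,K^{\tau}_{i_{1}\ldots i_{m};\,j_{1}\ldots j_{m}}, \]
and then use that $\K^{\tau}$ is a $(0,1)$-tensor for which, by (\ref{eq: defgct}), $K^{\tau}_{i_{1}\ldots i_{m};\,j_{1}\ldots j_{m}}=1$ precisely for the unique tuple with $i_{\tau(k)}=j_{k}$ for all $k$, equivalently $i_{l}=j_{\tau^{-1}(l)}$ for all $l$. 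Only that term survives, so
\[ (\A\times\K^{\tau})_{j_{1}\ldots j_{m}}=A_{j_{\tau^{-1}(1)}\ldots j_{\tau^{-1}(m)}}=\prod_{l=1}^{m}\al_{l}\bigl(j_{\tau^{-1}(l)}\bigr)=\prod_{k=1}^{m}\al_{\tau(k)}(j_{k}), \]
the last step being the substitution $l=\tau(k)$ in the product. This is exactly the entry of $\A^{\tau}$ found above, which proves (\ref{eq: K4Api}) for rank-1 $\A$ and hence, by linearity, for every $\A\in\T_{m,n}$.

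Two remarks. The CP-decomposition hypothesis is a convenience, not a necessity: the computation $(\A\times\K^{\tau})_{j_{1}\ldots j_{m}}=A_{j_{\tau^{-1}(1)}\ldots j_{\tau^{-1}(m)}}$ is valid for an arbitrary $\A\in\T_{m,n}$, and this is precisely the entrywise description of the mode-permuted tensor $\A^{\tau}$; reducing to rank-1 terms just makes the match with the CP-level formula (\ref{eq: pirk1dec}) visible (and, because the right side of (\ref{eq: K4Api}) does not depend on the decomposition, it also shows a posteriori that (\ref{eq: pirk1dec}) is well defined). Specializing $m=2$, $\tau=(12)$ gives $\K^{\tau}=\K_{n,n}$, so the lemma reduces to Theorem \ref{th:ctensor02}. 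The only real obstacle here is the permutation bookkeeping: the product with $\K^{\tau}$ produces the tuple $j_{\tau^{-1}(\cdot)}$, so one must check (through $l=\tau(k)$, as above) that this really is $\A^{\tau}$ in the sense of (\ref{eq: pirk1dec}) and not $\A^{\tau^{-1}}$ — which is what forces us to contract the indices of $\A$ against the \emph{first}, not the second, index group of $\K^{\tau}$, consistently with (\ref{eq:defprodin2pn}) and Theorem \ref{th:ctensor02}.
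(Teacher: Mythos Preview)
Your proof is correct and follows essentially the same approach as the paper: both verify the identity entrywise by combining the CP decomposition with the defining property (\ref{eq: defgct}) of $\K^{\tau}$, reducing the sum $\sum_{k_1,\ldots,k_m}A_{k_1\ldots k_m}K^{\tau}_{k_1\ldots k_m;\,j_1\ldots j_m}$ to the single surviving term $A_{j_{\tau^{-1}(1)}\ldots j_{\tau^{-1}(m)}}$ and matching it against the CP formula for $\A^{\tau}$. Your explicit reduction to rank-1 and the substitution $l=\tau(k)$ are just a cleaner packaging of the same computation, and your care with the $\tau$ versus $\tau^{-1}$ bookkeeping is in fact more explicit than the paper's own argument.
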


\begin{proof}
For any $(i_{1},i_{2},\ldots,i_{m})\in S(m,n)$, we let $\tau(i_{1}, i_{2},\ldots, i_{m})=(j_{1}, j_{2},\ldots, j_{m})$, i.e., $j_{k}=i_{\tau(k)}$ for all $k\in \dbra{m}$.
Then by (\ref{eq: rk1decomp}) 
\beyy\label{eq: Ataudec}
(A^{\tau})_{i_{1}i_{2}\ldots i_{m}} &=& \sum\limits_{j=1}^{R} a^{\tau(1)}_{i_{1} j} a^{\tau(2)}_{i_{2} j}\ldots a^{\tau(m)}_{i_{m} j} \\
                                                    &=& \sum\limits_{j=1}^{R} a^{(1)}_{j_{1} j} a^{(2)}_{j_{2} j}\ldots a^{(m)}_{j_{m} j}\\
                                                    &=& A_{j_{1} j_{2}\ldots j_{m}}
\eeyy
On the other side
\beyy\label{eq: Ataudecright}
(\A\times \K^{\tau})_{i_{1}i_{2}\ldots i_{m}} &=& \sum\limits_{k_{1},k_{2},\ldots, k_{m}} A_{k_{1}\ldots k_{m}} K_{k_{1}\ldots k_{m}; i_{1}\ldots i_{m}}\\
                                                                   &=& A_{j_{1} j_{2}\ldots j_{m}}
\eeyy
Thus $\A^{\tau} = \A\times \K^{\tau}$ for any $\tau\in S_{m}$.  The proof is completed.
\end{proof}

\indent We now denote $\K(\pi,p) =\overbrace{B\times\ldots \times B}^{p}$, where $B$ is the permutation matrix associated with $\pi\in S_{n}, p\in \dbra{m}$. Thus 
 $\K(\pi,m) =\K^{\pi}$. We denote 
 \[ \KK(m,n):=\set{\K^{\pi, m}: \quad  \pi\in S_{n} }, \]
The multiplication on $\KK(m,n)$ can be defined by (\ref{eq:defprodin2pn}). Then we have 
\begin{thm}\label{th:tpergroup}   
\begin{description}
  \item[(1)]  $\KK(m,n)$ is a subgroup of $\T_{2m,n}$ under the tensor multiplication defined by (\ref{eq:defprodin2pn}).
  \item[(2)]  $\K^{(id)}:=\K^{(id,m)} =\overbrace{I_{n}\times\ldots \times I_{n}}^{m}$ (corresponding to the identity map) is the unique identity element in $\T_{2m,n}$, i.e., 
\beq\label{eq: idel}
\X\times \K^{(id)} = \K^{(id)} \times \X =\X, \quad  \forall \X \in \T_{2m,n}
\eeq
 \item[(3)]  Every element $\K^{\pi}\in \KK(m,n)$ is invertible. Furthermore, its inverse is $(\K^{\pi})^{-1} = \K^{\pi^{-1}}$. 
\end{description}
\end{thm}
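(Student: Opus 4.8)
The plan is to verify the three assertions in sequence, using the explicit monomial description of a $\pi$-GCT from \eqref{eq: defgct} together with the associativity established in Lemma \ref{le:t4assoc}. The central observation, which I would isolate first as the key lemma, is a \emph{composition rule}: for any $\pi,\rho\in S_n$,
\beq\label{eq:gct-comp}
\K^{\pi}\times \K^{\rho} = \K^{\pi\rho}.
\eeq
To see this, write $\K^{\pi}$ and $\K^{\rho}$ via the entrywise formula $K^{\pi}_{i_1\ldots i_m;j_1\ldots j_m}=1 \ffi j_k=i_{\pi(k)}$ for all $k$, and similarly for $\rho$. Then in the sum defining $(\K^{\pi}\times\K^{\rho})_{\si^{(i)},\si^{(j)}}=\sum_{\si^{(k)}}K^{\pi}_{\si^{(i)},\si^{(k)}}K^{\rho}_{\si^{(k)},\si^{(j)}}$ there is at most one surviving multi-index $\si^{(k)}$, namely the one forced by $k_t=i_{\pi(t)}$; substituting into the second factor gives $j_s=k_{\rho(s)}=i_{\pi(\rho(s))}=i_{(\pi\rho)(s)}$, which is exactly the nonvanishing condition for $\K^{\pi\rho}$. (One should fix the convention for composing permutations here and keep it consistent; this is the one place where a sign/order slip could occur, so I would state the convention explicitly at the outset.) Equivalently, \eqref{eq:gct-comp} follows more conceptually from $\K^{\pi}=\overbrace{B\times\cdots\times B}^{m}$ with $B$ the permutation matrix of $\pi$, since the mode-wise action multiplies the permutation matrices.

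Granting \eqref{eq:gct-comp}, the three items are immediate. For (2): $\K^{(id)}$ is the $m$-fold product of $I_n$, and its entries are $K^{(id)}_{\si^{(i)},\si^{(j)}}=1\ffi j_k=i_k\ \forall k$, i.e.\ $\K^{(id)}$ is the "identity" $2m$-tensor $\delta_{i_1 j_1}\cdots\delta_{i_m j_m}$; plugging this into \eqref{eq:defprodin2pn} for either $\X\times\K^{(id)}$ or $\K^{(id)}\times\X$ collapses every inner sum to a single term and returns $\X$, giving \eqref{eq: idel}. Uniqueness of the two-sided identity in $\T_{2m,n}$ is the usual one-line monoid argument: if $\caE$ is another identity then $\caE=\caE\times\K^{(id)}=\K^{(id)}$. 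For (3): taking $\rho=\pi^{-1}$ in \eqref{eq:gct-comp} gives $\K^{\pi}\times\K^{\pi^{-1}}=\K^{\pi\pi^{-1}}=\K^{(id)}=\K^{\pi^{-1}}\times\K^{\pi}$, so every $\K^{\pi}$ is invertible in $\T_{2m,n}$ with $(\K^{\pi})^{-1}=\K^{\pi^{-1}}$. For (1): the map $S_n\to\KK(m,n)$, $\pi\mapsto\K^{\pi}$, is surjective by definition of $\KK(m,n)$; it is multiplicative by \eqref{eq:gct-comp}; hence its image is closed under the multiplication \eqref{eq:defprodin2pn}, contains $\K^{(id)}$ by the case $\pi=id$, and is closed under inverses by item (3). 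Since $\T_{2m,n}$ is an associative monoid under this multiplication by Lemma \ref{le:t4assoc} (with identity $\K^{(id)}$), $\KK(m,n)$ is a subgroup. As a bonus, the map $\pi\mapsto\K^{\pi}$ is a group homomorphism from $S_n$ onto $\KK(m,n)$, and it is injective (distinct permutations give different nonvanishing patterns), so $\KK(m,n)\cong S_n$.

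The only genuine obstacle is bookkeeping: the composition rule \eqref{eq:gct-comp} is the crux, and one must be careful that the tensor product \eqref{eq:defprodin2pn} is being read with the "second block of $\A$ contracted against the first block of $\B$" convention so that \eqref{eq:gct-comp} comes out as $\K^{\pi\rho}$ rather than $\K^{\rho\pi}$; whichever convention is chosen, items (2) and (3) are unaffected, and for (1) it only changes whether $\pi\mapsto\K^{\pi}$ or $\pi\mapsto\K^{\pi^{-1}}$ is the homomorphism. I would therefore do (1) only after \eqref{eq:gct-comp}, and present \eqref{eq:gct-comp} as a short standalone computation (or appeal to the mode-product description) before touching the group axioms. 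Everything else — associativity, existence of identity — is already available from Lemma \ref{le:t4assoc} and the definition of $\K^{(id)}$, so the proof is essentially the verification of \eqref{eq:gct-comp} plus the standard "homomorphic image of a group is a subgroup" packaging.
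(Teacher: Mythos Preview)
Your proposal is correct and follows essentially the same route as the paper: the paper's proof of (1) is precisely your composition rule $\K^{\pi}\K^{\rho}=\K^{\pi\rho}$, obtained via the mode-product description $\K^{\pi}=B^{\times m}$, and its proofs of (2) and (3) are the same Kronecker-delta entry computations you outline. The only difference is organizational: you isolate the composition rule first and then read off (3) as the special case $\rho=\pi^{-1}$, whereas the paper re-derives (3) by a separate direct entry calculation; your packaging via the homomorphism $S_n\to\KK(m,n)$ is a clean addition but not a different argument.
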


\begin{proof} 
To prove (1), we let $\K_{i}=\overbrace{P_{i}\times \ldots \times P_{i}}^{m}$ where $P_{i}\in \R^{n\times n}$ corresponds resp. to a permutation $\pi_{i}\in S_{n}$, $i=1,2$. 
Then we have 
\beyy 
    \K_{1}\K_{2}  &=& (P_{1}\times P_{1}\times \ldots \times P_{1}) (P_{2}\times P_{2}\times \ldots \times P_{2}) \\
                         &=& (P_{1}P_{2})\times (P_{1}P_{2})\times \ldots \times (P_{1}P_{2}) \\
                         &=&  P\times P\times \ldots \times P 
\eeyy
where $P=P_{1}P_{2}\in \R^{n\times n}$ is also a permutation matrix corresponding to a permutation $\pi=\pi_{1}\pi_{2}$. \\
\indent  For (2), it suffices to prove the equality $\K^{(id)} \times \X =\X$ in (\ref{eq: idel}) since the other part is similar to it. Denote $\Y = \X\times \K^{id}$. Then 
\beyy\label{eq: identity} 
Y_{i_{1}\ldots i_{m};j_{1}\ldots j_{m}}  &=& \sum\limits_{k_{1},\ldots, k_{m}}  X_{i_{1}\ldots i_{m};k_{1}\ldots k_{m}} K_{k_{1}\ldots k_{m};j_{1} \ldots j_{m}}\\
                                                      &=& \sum\limits_{k_{1},\ldots, k_{m}}  X_{i_{1}\ldots i_{m};k_{1}\ldots k_{m}} \delta_{k_{1}j_{1}}\ldots \delta_{k_{m}j_{m}}\\
                                                      &=& \sum\limits_{k_{1},\ldots, k_{m}}  X_{i_{1}\ldots i_{m};j_{1}\ldots j_{m}}       
\eeyy
for all possible $(i_{1},\ldots, i_{m}), (j_{1},\ldots, j_{m})\in S(m,n)$. Thus we have $\X\times \K^{id} =\X$. The similar argument can be applied to prove 
$\X=\K^{id}\times \X$. \\
\indent  For (3), it suffices to verify that  
\beq\label{eq: inv1}
\K^{\pi} \K^{\pi^{-1}} = \K^{id}
\eeq  
and 
\beq\label{eq: inv2}
\K^{\pi^{-1}}\K^{\pi} =\K^{id}
\eeq
To prove (\ref{eq: inv1}), we write $\si:=\pi^{-1}$. Then each entry of the left hand side of (\ref{eq: inv1}) is 
\beyy\label{eq: prfinv1} 
(\K^{\pi} \K^{\si})_{i_{1}\ldots i_{m};j_{1}\ldots j_{m}}  
&=& \sum\limits_{k_{1},\ldots, k_{m}}  K^{\pi}_{i_{1}\ldots i_{m};k_{1} \ldots k_{m}} K^{\si}_{k_{1} \ldots k_{m};j_{1}\ldots j_{m}} \\
&=& \delta_{\si(\pi(i_{1})) j_{1}} \delta_{\si(\pi(i_{2})) j_{2}} \ldots \delta_{\si(\pi(i_{m})) j_{m}} \\
&=& \delta_{i_{1}j_{1}} \delta_{i_{2}j_{2}} \ldots \delta_{i_{m}j_{m}}         
\eeyy
the last equality is due to the fact that  $\si(\pi(k))= (\pi^{-1}\pi)(k)=k$ for each $k\in \dbra{n}$.  It follows that each entry of  the tensor $\K^{\pi} \K^{\si}$ is either 
1 or 0, and that  $(\K^{\pi} \K^{\si})_{i_{1}\ldots i_{m};j_{1}\ldots j_{m}}=1$ if and only if $i_{1}=j_{1}, i_{2}=j_{2}, \ldots, i_{m}=j_{m}$. Thus (\ref{eq: inv1}) holds. 
Similar argument applies to (\ref{eq: inv2}).  The proof is completed.   
\end{proof}

\indent  From Theorem \ref{th:tpergroup}, we can see that $\K^{id}$ is the unique identity element in the group $\T_{2m,n}$.  Given any element $\A\in \T_{2m,n}$.
The invertibility of $\A$ can also be defined as in Item (3) of Theorem \ref{th:tpergroup}, i.e., $\A$ is invertible if there exists a tensor $\B\in \T_{2m,n}$ such that 
\beq\label{eq: inv0}
\A \B = \B \A = \K^{id}
\eeq 
The invertibility of an arbitrary tensor in $\T_{2m,n}$ is too complicated.  But if we consider the following set 
\[ \G_{m,n} :=\set{\A:=\overbrace{A\times\ldots \times A}^{m}:  \forall A\in\R^{n\times n} }  \]
Then $\G_{m,n}\subset \T_{2m,n}$ is isometric to $\R^{n\times n}$.  Furthermore, we denote 
\[ A^{\times m}:= \overbrace{A\times\ldots \times A}^{m} \in \T_{2m,n} \]
and 
\[ \GL_{m,n} :=\set{ A^{\times m}:  A\in GL(n) }  \]
where $GL(n)$ is the set of all the nonsingular matrices in $\R^{n\times n}$.  Then we have 

\begin{thm}\label{th:invertt}
Given any tensor $\A =A^{\times m} \in \G_{m,n}$. Then $\A$ is invertible if and only if  $A$ is invertible. Furthermore, the inverse of $\A$ is 
$(A^{-1})^{\times m}$. 
\end{thm}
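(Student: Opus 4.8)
The plan is to reduce everything to one multiplicativity identity for the power $A^{\times m}$ and then read off both directions. First I would establish the analogue of item~(1) of Theorem~\ref{th:tpergroup} for arbitrary matrices: for all $A,B\in\R^{n\times n}$ one has $A^{\times m}\times B^{\times m}=(AB)^{\times m}$, and $I_n^{\times m}=\K^{(id)}$. The first identity is a one-line index computation: by definition the $(i_1\ldots i_m;j_1\ldots j_m)$-entry of $A^{\times m}$ is $\prod_{t=1}^m a_{i_tj_t}$, so by (\ref{eq:defprodin2pn})
\[
(A^{\times m}\times B^{\times m})_{i_1\ldots i_m;j_1\ldots j_m}
=\sum_{k_1,\ldots,k_m}\prod_{t=1}^m a_{i_tk_t}b_{k_tj_t}
=\prod_{t=1}^m\Big(\sum_{k_t}a_{i_tk_t}b_{k_tj_t}\Big)
=\prod_{t=1}^m (AB)_{i_tj_t},
\]
which is exactly $((AB)^{\times m})_{i_1\ldots i_m;j_1\ldots j_m}$; the only real move is factoring the $m$-fold sum into a product of single sums. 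The second identity is immediate, since the entries of $I_n^{\times m}$ are $\prod_t\delta_{i_tj_t}$, and $\K^{(id)}$ is the unique identity of $\T_{2m,n}$ by item~(2) of Theorem~\ref{th:tpergroup}.

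For the ``if'' direction, suppose $A$ is invertible and put $\B:=(A^{-1})^{\times m}\in\T_{2m,n}$. Then the multiplicativity identity gives $A^{\times m}\times\B=(AA^{-1})^{\times m}=I_n^{\times m}=\K^{(id)}$ and likewise $\B\times A^{\times m}=\K^{(id)}$, so $\A=A^{\times m}$ is invertible with inverse $(A^{-1})^{\times m}$, exactly as asserted.

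For the ``only if'' direction I would argue by contraposition. Suppose $A$ is singular; choose $0\neq\bv\in\R^n$ with $A\bv=0$ and any $0\neq\bw\in\R^n$, and set $\Y:=(\bv\bw^{\top})^{\times m}\in\T_{2m,n}$. Since $\bv\bw^{\top}$ is a nonzero matrix, $\Y\neq 0$ (take indices with $v_{i_t}\neq 0$ and $w_{j_t}\neq 0$), while $A\bv\bw^{\top}=(A\bv)\bw^{\top}$ is the zero matrix, so by multiplicativity $A^{\times m}\times\Y=(A\bv\bw^{\top})^{\times m}=\mathbf 0$. If $\A=A^{\times m}$ were invertible with inverse $\B$, then using that $\K^{(id)}$ is a two-sided identity together with associativity of the product on $\T_{2m,n}$ (Lemma~\ref{le:t4assoc}) I would get $\Y=\K^{(id)}\times\Y=(\B\times\A)\times\Y=\B\times(\A\times\Y)=\mathbf 0$, contradicting $\Y\neq 0$. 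Hence $A$ must be invertible, and then the formula $\A^{-1}=(A^{-1})^{\times m}$ follows from the ``if'' part (or from uniqueness of inverses in the monoid $\T_{2m,n}$).

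The only genuine subtlety is the necessity direction: one needs a concrete nonzero tensor in $\T_{2m,n}$ annihilated by $A^{\times m}$ when $A$ is singular, and the rank-one choice $(\bv\bw^{\top})^{\times m}$ supplies it cleanly once the multiplicativity identity is available; everything else is bookkeeping, relying on the identity and associativity already proved in Theorem~\ref{th:tpergroup} and Lemma~\ref{le:t4assoc}. (Alternatively one could flatten $A^{\times m}$ to the Kronecker power $A^{\otimes m}$, under which (\ref{eq:defprodin2pn}) becomes ordinary matrix multiplication and $\K^{(id)}$ becomes $I_{n^m}$, and then use $\det(A^{\otimes m})=(\det A)^{mn^{m-1}}$; but the direct argument above avoids importing the Kronecker determinant formula.)
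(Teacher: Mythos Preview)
Your proof is correct, and its engine --- the multiplicativity identity $A^{\times m}\times B^{\times m}=(AB)^{\times m}$ together with $I_n^{\times m}=\K^{(id)}$ --- is precisely what the paper uses; in fact the paper's entire proof is just that identity followed by the line ``$A^{\times m}B^{\times m}=\K^{(id)}$ if and only if $AB=I_n$.'' The difference is in the only-if direction: the paper tacitly treats ``$\A$ invertible'' as ``$\A$ has an inverse of the form $B^{\times m}$,'' so necessity is dispatched in one stroke inside $\G_{m,n}$, whereas you take invertibility in the ambient monoid $\T_{2m,n}$ (the definition~(\ref{eq: inv0})) and supply a genuine argument by exhibiting the nonzero zero-divisor $\Y=(\bv\bw^{\top})^{\times m}$ when $A$ is singular. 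Your version is therefore strictly more complete --- it actually rules out the possibility that a singular $A$ yields an $A^{\times m}$ with some exotic inverse outside $\G_{m,n}$, a point the paper's argument does not address. The flattening/Kronecker alternative you sketch at the end is another clean way to close the same gap.
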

\begin{proof}
This result is immediate from the fact that 
\[ 
A^{\times m} B^{\times m} = (\overbrace{A\times\ldots \times A}^{m})(\overbrace{B\times\ldots \times B}^{m}) =\overbrace{AB\times\ldots \times AB}^{m} 
\]
Thus $A^{\times m} B^{\times m} = \K^{id}$ if and only if $AB=I_{n}$, i.e., $B=A^{-1}$. 
\end{proof}
 
\indent Let $w:=\set{i_{1}, i_{2},\ldots, i_{p}}$ be a subset of $\dbra{m}$ ($1\le p\le m$).  The \emph{incomplete product} $\K^{\tau,p}\times_{w}\A$ 
can be interpreted as the simultaneous row permutations of $A[n]$ by $\tau$ for each $n\in w$, where $A[n]$ stands for the matrix 
obtained from the unfolding of $\A$ along mode-$n$.\\     

\indent  For $m=2$, there are two commutation tensors $\K^{(12)}$ and $\K^{id}$, where $\tau=id$ is the identity map on $\set{1,2}$ and $\K:=\K^{(12)}$ is 
the commutation tensor $\K_{n,n}$. It is obvious that $\K^{id}=\K^{2}:=\caI$. \\

\indent  Given a tensor $\A\in \T_{2m,n}$. We call a matrix $A=(a_{ij})$ a \emph{balance unfolding} of $\A$,  if  
\beq\label{eq: defbuf} 
a_{ij} =A_{i_{1}i_{2}\ldots i_{m}; j_{1}j_{2}\ldots j_{m}} 
\eeq
with $i = 1+\sum\limits_{k=1}^{m} (i_{k}-1)n^{k-1}, j = 1+\sum\limits_{k=1}^{m} (j_{k}-1)n^{k-1}$, i.e., each row index $(i_{1},i_{2}, \ldots, i_{m})$ is turned into 
a row index, and each column index $(i_{1},i_{2}, \ldots, i_{m})$ is turned into a column index of $A$(See e.g. \cite{kolda2009}). It is obvious that the balance 
unfolding of a tensor $\A\in \T_{2m,n}$ will produce a matrix of size $n^{m}\times n^{m}$.  A tensor $\A\in \T_{2m,n}$ is called a \emph{balanced permutation tensor} 
(abbrev. BPT) if the consequence of the balance unfolding of $\A$ is a permutation matrix.  We conclude this section by the following property of a nonnegative 
tensor in $\T_{2m,n}$. \\ 
\begin{thm}\label{th: nntinv}
Let $m,n>1$ be two positive integers, and let $\A\in\T_{2m,n}$ be an entrywise nonnegative tensor.  If it has a nonnegative inverse, then the balance unfolding of  
$\A$ is a generalised permutation matrix.    
\end{thm}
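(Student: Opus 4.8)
The plan is to show that the balance unfolding is an isomorphism of unital associative algebras from $(\T_{2m,n},\times)$ onto the full matrix algebra $\R^{N\times N}$ with $N:=n^{m}$, and then to invoke the classical fact that a nonnegative matrix with a nonnegative inverse is a generalised permutation matrix.

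First I would make the index encoding explicit: for $\sigma=(i_{1},\ldots,i_{m})\in S(m,n)$ put $\phi(\sigma):=1+\sum_{k=1}^{m}(i_{k}-1)n^{k-1}$. This is the mixed-radix (base-$n$) representation, so $\phi$ is a bijection from $S(m,n)$ onto $\dbra{N}$, and therefore the balance unfolding $\Phi:\T_{2m,n}\to\R^{N\times N}$ of (\ref{eq: defbuf}), which sends $\A=(A_{\sigma;\tau})$ to the matrix whose $(\phi(\sigma),\phi(\tau))$-entry is $A_{\sigma;\tau}$, is a linear bijection that merely relabels entries. Next I would verify that $\Phi$ carries the multiplication (\ref{eq:defprodin2pn}) onto matrix multiplication: for $\A,\B\in\T_{2m,n}$ and $\sigma,\tau\in S(m,n)$,
\[
\Phi(\A\B)_{\phi(\sigma),\phi(\tau)}=(\A\B)_{\sigma;\tau}=\sum_{\kappa\in S(m,n)}A_{\sigma;\kappa}B_{\kappa;\tau}=\sum_{r=1}^{N}\Phi(\A)_{\phi(\sigma),r}\,\Phi(\B)_{r,\phi(\tau)}
\]
where the last step uses that $\kappa\mapsto\phi(\kappa)$ ranges bijectively over $\dbra{N}$. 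Since $(\K^{id})_{i_{1}\ldots i_{m};j_{1}\ldots j_{m}}=\delta_{i_{1}j_{1}}\cdots\delta_{i_{m}j_{m}}$ and $\phi$ is injective, $\Phi(\K^{id})=I_{N}$; together with Theorem \ref{th:tpergroup} this shows $\Phi$ is a unital algebra isomorphism. In particular, if $\A\ge 0$ has a nonnegative inverse $\B$ (so $\A\B=\B\A=\K^{id}$), then $M:=\Phi(\A)$ is an entrywise nonnegative matrix, $\Phi(\B)\ge 0$, and $M\Phi(\B)=\Phi(\B)M=I_{N}$, i.e. $M^{-1}=\Phi(\B)\ge 0$.

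It then remains to prove the classical lemma that a nonnegative $M\in\R^{N\times N}$ with $M^{-1}\ge 0$ is a generalised permutation matrix. For $i\ne j$ the identity $0=(MM^{-1})_{ij}=\sum_{k}M_{ik}(M^{-1})_{kj}$ forces every summand to vanish, so for each fixed $k$ one cannot have simultaneously $M_{ik}>0$ and $(M^{-1})_{kj}>0$ with $i\ne j$; hence the support of the $k$-th column of $M$ and the support of the $k$-th row of $M^{-1}$ are either empty or a single common index. Since $M$ is invertible no column of $M$ is zero, so each column of $M$ has exactly one positive entry; these $N$ positive entries occupy $N$ rows, and a repetition would leave some row zero, again contradicting invertibility. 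Thus $M$ has exactly one positive entry in each row and each column, i.e. $M$ is a permutation matrix times a positive diagonal matrix. Applying this to $M=\Phi(\A)$, which is exactly the balance unfolding of $\A$, yields the theorem.

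The only step that is not completely routine is the identification of the multiplication (\ref{eq:defprodin2pn}) with matrix multiplication under $\Phi$, and even there the content is merely that the contracted multi-index $\kappa$ runs over the same set that the mixed-radix encoding identifies with $\dbra{n^{m}}$; the matrix lemma on nonnegative inverses is standard. I therefore expect no genuine obstacle, only careful bookkeeping of the index encoding.
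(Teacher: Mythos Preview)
Your proposal is correct and follows the same route as the paper: unfold $\A$ and its inverse to nonnegative $N\times N$ matrices, observe that the tensor identity $\A\B=\B\A=\K^{id}$ becomes $AB=BA=I_{N}$, and conclude via the classical fact that a nonnegative matrix with nonnegative inverse is a generalised permutation matrix. The paper is terser---it simply asserts the unfolded equation and cites \cite{Minc1988} for the matrix lemma rather than proving it---but the substance is identical, and your explicit verification that the balance unfolding is a unital algebra isomorphism fills in exactly the step the paper leaves implicit.
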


\begin{proof}
Let $A$ and $B$ be respectively the balance unfolding of $\A$ and $\B:=\A^{-1}$ where $\A^{-1}$ stands for the inverse of $\A$.  Then 
$A, B\in \R^{n^{m}\times n^{m}}$ are both entrywise nonnegative by the hypothesis.  Since $\A\B=\B\A=\K^{id}$, it follows that 
\[ AB=BA=I_{N},  N:=n^{m} \]
By \cite{Minc1988}, $A$ must be a generalised permutation matrix $A=(A_{ij})\in \R^{n^{m}\times n^{m}}$, that is, for each $i,j\in \dbra{n^{m}}$, there exists a unique 
nonzero positive entry.     
\end{proof}
   
\section{From linear preservers to multilinear preservers} 
\setcounter{equation}{0}

\indent In this section, we will use the commutation tensors to deal with the linear preserving problem (LPP). The linear preserving problem has been investigated 
since the late 20th century by G. Frobenius.  There are a lot of work concerning the LPP. We refer the reader to \cite{} for more detail.\\
\indent A linear map $\T$ on $\C^{n\times n}$ is called a \emph{determinant preserver}  if 
\beq\label{eq: detpres01}
\det(\T(A))=\det(A) \quad  \forall A\in \C^{n\times n} 
\eeq  
 A linear transformation $\T\in (\C^{n\times n})^{\star}$ (which is called the dual space of $\C^{n\times n}$) is called a \emph{rank-1 preserver} 
 if $\rank(A)=1$ always implies $\rank(\T(A)) =1$.  Marcus and Moyls showed in 1959 that 
\begin{lem}\label{le: rk1pre02}
$\T$ is a rank-1 preserver on $\C^{n\times n}$ if and only if there exist invertible matrices $P,Q\in \C^{n\times n}$ such that either 
\beq\label{eq: pres01}
\T(A)=PAQ \quad \forall A\in \C^{n\times n} 
\eeq
or 
\beq\label{eq: pres02}
\T(A)=PA^{\top}Q \quad  \forall A\in \C^{n\times n} 
\eeq  
\end{lem}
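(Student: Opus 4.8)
\medskip
\noindent\textbf{Proof proposal.}
The plan is to reconstruct $\T$ from its action on the spaces of rank-one matrices, along the lines of the original argument of Marcus and Moyls. The easy implication is immediate: left and right multiplication by an invertible matrix preserve rank, and so does transposition, so if $\T(A)=PAQ$ or $\T(A)=PA^{\top}Q$ with $P,Q$ invertible then $\rank(\T(A))=\rank(A)$ for every $A$, and in particular rank-one matrices are sent to rank-one matrices.

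For the converse, fix a nonzero $x\in\C^{n}$ and put $\caR_{x}:=\set{xv^{\top}:v\in\C^{n}}$, an $n$-dimensional space of matrices of rank at most one. The linear map $v\mapsto\T(xv^{\top})$ is injective, because for $v\neq 0$ the matrix $xv^{\top}$ has rank one, hence so does $\T(xv^{\top})$, which is therefore nonzero; thus $\caW_{x}:=\T(\caR_{x})$ is an $n$-dimensional subspace of $\C^{n\times n}$ all of whose nonzero members have rank one. At this point I invoke the classification of rank-one subspaces: any subspace in which every nonzero matrix has rank one is contained in one of the two maximal such spaces, $\set{ub^{\top}:u\in\C^{n}}$ for a fixed nonzero $b$ or $\set{av^{\top}:v\in\C^{n}}$ for a fixed nonzero $a$; since $\caW_{x}$ already has dimension $n$ it equals one of these, and I call $x$ of the first or the second kind accordingly. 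Establishing this classification — which hinges on the fact that a rank-one space cannot contain $a_{1}b_{1}^{\top}$ and $a_{2}b_{2}^{\top}$ with both $\{a_{1},a_{2}\}$ and $\{b_{1},b_{2}\}$ independent — is, to my mind, the main technical point.

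Next I would show the kind is independent of $x$. If, for independent $x,x'$, we had $\caW_{x}=\set{ub^{\top}:u\in\C^{n}}$ and $\caW_{x'}=\set{av^{\top}:v\in\C^{n}}$, then for each $v$ the matrix $\T((x+x')v^{\top})=\T(xv^{\top})+\T(x'v^{\top})$ has rank at most one, which forces $\T(xv^{\top})$ or $\T(x'v^{\top})$ to lie on the line spanned by $ab^{\top}$; the set of $v$ for which the first holds is the preimage of that line under an injective linear map, so it is a subspace of dimension at most one, and likewise for the second, whence their union cannot be all of $\C^{n}$ when $n\ge 2$ (the case $n=1$ being trivial). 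I would also record that no $\caW_{x}$ can degenerate, i.e.\ the family of rows $b(x)$ (resp.\ columns $a(x)$) is not projectively constant: otherwise the entire image $\T(\C^{n\times n})$ would lie inside a single $n$-dimensional compression space, forcing $\dim\ker\T\ge n^{2}-n>(n-1)^{2}$, whereas every subspace of $\C^{n\times n}$ of dimension exceeding $(n-1)^{2}$ contains a nonzero rank-one matrix, contradicting that $\ker\T$ contains none.

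Finally, suppose every $x$ is of the second kind, so $\T(xv^{\top})=a(x)(\psi_{x}v)^{\top}$ with $\psi_{x}$ invertible. Applying the rank-at-most-one argument to $\T((x+x')v^{\top})$ for $x,x'$ with $a(x),a(x')$ independent shows $\psi_{x'}$ is a scalar multiple of $\psi_{x}$, and chaining these relations through a reference vector (legitimate since $a(\cdot)$ is not projectively constant) produces a fixed invertible $S$ and nonzero scalars $\rho(x)$ with $\psi_{x}=\rho(x)S$, so that $\T(xv^{\top})=(\rho(x)a(x))(Sv)^{\top}$. Linearity of $\T$ in $x$ then forces $x\mapsto\rho(x)a(x)$ to be a linear map, say $x\mapsto Px$, and expanding $A=\sum_{i,j}a_{ij}e_{i}e_{j}^{\top}$ gives $\T(A)=PAS^{\top}$, with $P$ and $S$ invertible since singularity of either would send some rank-one matrix to $0$; putting $Q=S^{\top}$ yields (\ref{eq: pres01}). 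The first-kind case is entirely parallel and produces $\T(A)=PA^{\top}Q$, i.e.\ (\ref{eq: pres02}). Apart from the rank-one-subspace classification, the only remaining work is the bookkeeping that promotes the pointwise data to a single bilinear form.
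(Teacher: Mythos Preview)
The paper does not supply its own proof of this lemma: it is quoted as the 1959 theorem of Marcus and Moyls and used as a black box, so there is nothing in the paper to compare your argument against line by line.

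That said, your proposal is a faithful and essentially correct reconstruction of the Marcus--Moyls strategy. The key ingredients --- the classification of $n$-dimensional rank-one subspaces as either ``fixed-column'' or ``fixed-row'' spaces, the dichotomy argument showing all $x$ are of the same kind, and the chaining that collapses the family $\{\psi_{x}\}$ to scalar multiples of a single invertible $S$ --- are all in order. One small remark: your non-degeneracy step (ruling out that $a(\cdot)$ is projectively constant) invokes the algebro-geometric fact that every subspace of $\C^{n\times n}$ of dimension exceeding $(n-1)^{2}$ meets the Segre variety. This is correct over $\C$, but it is heavier machinery than the rest of the proof; the classical route avoids it by running the same rank-one-subspace analysis on the ``column'' spaces $\caR'_{v}:=\{xv^{\top}:x\in\C^{n}\}$ and playing the row picture against the column picture, which gives non-degeneracy for free and keeps the whole argument elementary. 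Either way your conclusion stands.
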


\indent In 1977, H. Minc showed \cite{Minc1977} that a linear map $\T$ on $\C^{n\times n}$ is a determinant preserver if and only if there exist invertible matrices 
$P,Q\in \C^{n\times n}$ with $\det(PQ)=1$ such that either (\ref{eq: pres01}) or (\ref{eq: pres02}) holds. A linear rank preserver is surely a rank-1 preserver, the 
inverse is also true when the map is invertible.  A linear determinant preserver must be a rank-1 preserver (by Lemma \ref{le: rk1pre02}).  
As the operations on matrices, there are only two kinds, i.e., elementary operations and the transpose. Our aim is to unify them into one formula.\\
\indent By Theorem \ref{th:ctensor02} we know that the transpose of a matrix $A\in \R^{n\times n}$ is associated with $A$ by commutation tensor $\K_{n}$ through 
$A^{\top} = \K A$. \\ 
\indent  We denote by $\aut(V)$ for the set of all the linear automorphisms on a linear space $V$, and let $\pi\in \aut(\R^{n})$. Then $\pi$ is determined by 
its behaviour on the coordinate vectors $\beps_{1}, \beps_{2}, \ldots, \beps_{n}$, where $\beps_{i}$ is the $i$th coordinate vector of $\R^{n}$. Denote 
$\hat{\beps}_{i}:=\pi(\beps_{i})$ for all $i\in [n]$.  By the linearity of $\pi$, we have 
\beq\label{eq:linpi}
\pi(\bx) = \sum\limits_{j=1}^{n} x_{j} \hat{\beps}_{j},  \quad   \bx=(x_{1},x_{2},\ldots, x_{n})^{\top}\in \R^{n}  
\eeq

\indent We are now ready to describe a linear symmetric automorphism $\phi_{\pi} \in \aut(\ST_{m,n})$. Given any symmetric tensor $\A\in \ST_{m,n}$. 
$\A$ has a symmetric CP decomposition (\ref{eq: symrk1decomp}). Thus     
\beq\label{eq: phi2rk1decomp}
\phi_{\pi}(\A) = \sum\limits_{j=1}^{R} \hal_{j}^{m}
\eeq 
where $\hal_{j}=\pi(\al_{j})$ for each $j\in [R]$.  We call $\phi$ a \emph{positive map} if it preserves the nonnegativity of tensors.  We have 

\begin{lem}\label{le: 301}
Let $\phi\in \aut(\ST_{m,n})$ be a linear symmetric rank-1 preserver.  Then for any $0\neq \bx\in \R^{n}$, there exists a nonzero scalar $\la\in \R$ and 
$0\neq \by\in \R^{n}$ such that 
\beq\label{eq: symrk1}
\phi (\bx^{m}) = \la \by^{m}
\eeq 
\end{lem}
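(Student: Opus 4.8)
The plan is to use the symmetric CP decomposition of $\bx^m$ together with the rank-1 preserving hypothesis. Since $\bx^m$ is already a symmetric rank-1 tensor (its symmetric CP rank is $1$), the tensor $\phi(\bx^m)$ lies in $\ST_{m,n}$ and, because $\phi$ preserves rank-1 symmetric tensors, $\phi(\bx^m)$ must itself be a nonzero symmetric rank-1 tensor. By the form of the symmetric CP decomposition in $(\ref{eq: symrk1decomp})$, a symmetric rank-1 tensor of order $m$ can be written as $\mu\, \by^m$ for some $0\neq\by\in\R^n$ and some scalar; absorbing the scaling I may take it in the form $\la\by^m$. Hence $\phi(\bx^m)=\la\by^m$ as desired.

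First I would make precise the claim that a nonzero symmetric rank-1 $(m,n)$-tensor is exactly one of the form $\la\by^m$. One direction is the definition. For the converse: if $\A = \al_1\times\al_2\times\cdots\times\al_m$ is rank-1 and symmetric, then comparing entries $A_{i i \ldots i}=\prod_k (\al_k)_i$ with the symmetry constraints forces each $\al_k$ to be a common scalar multiple of a single vector $\by$, so $\A=\la\by^m$ with $\la=\prod_k c_k$ where $\al_k=c_k\by$. (Care is needed when some coordinates vanish; one argues on the support, using that $\A\neq 0$.) This is the standard fact that the symmetric rank of a symmetric rank-1 tensor is $1$, and I would cite or reprove it briefly.

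Next I would invoke the hypothesis $\phi\in\aut(\ST_{m,n})$ is a linear symmetric rank-1 preserver: since $\bx\neq 0$, $\bx^m$ is a nonzero symmetric rank-1 tensor, so $\phi(\bx^m)$ is symmetric (as $\phi$ maps $\ST_{m,n}$ to itself) and rank-1. Moreover $\phi(\bx^m)\neq 0$ because $\phi$ is an automorphism, hence injective. Applying the characterization from the previous step, $\phi(\bx^m)=\la\by^m$ for some $0\neq\by\in\R^n$ and $\la\neq 0$.

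The main obstacle, and the only genuinely non-formal point, is the structural lemma that a nonzero symmetric rank-1 tensor must take the form $\la\by^m$ — i.e., that one cannot build a symmetric rank-1 tensor from genuinely different vectors $\al_1,\ldots,\al_m$. The subtlety is handling zero coordinates: if $\al_1$ has a zero in coordinate $i$ but $\al_2$ does not, one must check the argument still goes through by restricting to the set of indices where all the $\al_k$ are nonzero and then showing that set is a "box" forcing proportionality. Everything else — that $\phi$ sends $\ST_{m,n}$ into itself, preserves rank-1, and is injective — is immediate from the definitions, so the proof is short once this lemma is in hand.
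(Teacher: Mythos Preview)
Your proposal is correct and follows essentially the same route as the paper: both reduce the lemma to the structural fact that a symmetric tensor of (ordinary) rank one must have the form $\lambda\by^{m}$, and both establish this by writing $\phi(\bx^{m})=\alpha_{1}\times\cdots\times\alpha_{m}$ and using the symmetry of the image to force the $\alpha_{k}$ to be pairwise proportional. The paper carries out the entry comparison explicitly---fixing indices $i_{3},\ldots,i_{m}$ with $a_{i_{k}k}\neq 0$ and comparing the $(i,j,i_{3},\ldots,i_{m})$ and $(j,i,i_{3},\ldots,i_{m})$ entries to obtain $a_{i1}a_{j2}=a_{j1}a_{i2}$, hence $\rank[\alpha_{1},\ldots,\alpha_{m}]=1$---which is precisely the step you flag as the only non-formal point.
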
 

\begin{proof}
Given any $0\neq \bx\in \R^{n}$. Since $\phi$ is a rank-1 preserver, there exist nonzero vectors, say $\al_{1},\al_{2}, \ldots, \al_{m}\in \R^{n}$, such that 
\beq\label{eq: prfsymrk101}
\phi (\bx^{m}) = \al_{1}\times \al_{2}\times \ldots \times \al_{m} 
\eeq 
Denote $\A=(A_{i_{1}i_{2}\ldots i_{m}})= \al_{1}\times \al_{2}\times \ldots \times \al_{m}$, and $A=[\al_{1},\al_{2},\ldots,\al_{m}]=(a_{ij})\in \R^{n\times m}$. 
The result is equivalent to $\rank(A)=1$. We first note that $\A$ is a nonzero tensor since $\rank(\A)=1$, and thus each $\al_{j}\neq 0$. We may assume 
w.l.g. that $a_{i_{k} k}\neq 0$ for $k=3,4,\ldots,n$ (if $n>2$).  Now consider two index 
\[ 
(i,j,i_{3},i_{4},\ldots, i_{m}), \quad   (j, i, i_{3},i_{4},\ldots, i_{m}) \in S(m,n) 
\]
By the symmetry of $\A$, we have 
\[ A_{iji_{3}i_{4}\ldots i_{m}} = A_{j i i_{3}i_{4}\ldots i_{m}} \]
which, by (\ref{eq: prfsymrk101}), is equivalent to 
\[ a_{i1}a_{j2}a_{i_{3}3}a_{i_{4}4}\ldots a_{i_{m}m} = a_{j1}a_{i2}a_{i_{3}3}a_{i_{4}4}\ldots a_{i_{m}m} \]
It follows that  
\beq\label{eq: prfsymrk102}
a_{i1}a_{j2} = a_{j1}a_{i2} ,\quad  \forall  i, j \in [n]
\eeq
Thus we have 
\beq\label{eq: prfsymrk103}
\frac{a_{i1}}{a_{i2}} = \frac{a_{j1}}{a_{j2}} ,\quad    \forall  (i, j): a_{i2}a_{j2}\neq 0. 
\eeq
It follows by (\ref{eq: prfsymrk103}) that $\rank(A)=1$.  So we may write $\al_{k}=\mu_{k}\al_{1}$ for all $k\in \dbra{m}$ 
($0\neq \mu_{j}\in \R$) with $\mu_{1}=1$.  Denote $\by:=\al_{1}\in \R^{n}$ which is a nonzero vector, and  
$\la_{j}: =\mu_{1}\mu_{2}\ldots \mu_{m}$. Then (\ref{eq: symrk1}) is proved. 
\end{proof}

\indent  It is easy to see from Lemma \ref{le: 301} that a linear symmetric rank-1 preserver $\phi$ on $\aut(\T_{m,n})$ can be uniquely 
determined by a linear positive mapping, say $\pi$, on $\R^{n}$, i.e., $\pi(\bx)=\hat{\bx} = M \bx$ for some invertible matrix 
$M=(m_{ij})\in \R^{n\times n}$.  Now we express our main result on the multlinear symmetric rank preserver. 
\begin{thm}\label{th: 301}
Let $\phi$ be a multi-linear map on $\ST_{m,n}$. Then $\phi\in \aut(\ST_{m,n})$ is a linear symmetric rank preserver if and only if there exists an invertible 
matrix $B\in \RPlus^{n\times n}$ such that 
\beq\label{eq: phi_symrkposi}
\phi (\X) = \dbra{B}\cdot{} \X
\eeq 
for any symmetric tensor $\X\in \T_{m,n}$. 
\end{thm}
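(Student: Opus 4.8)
The plan is to prove both directions by exploiting the symmetric CP decomposition (\ref{eq: symrk1decomp}) together with Lemma \ref{le: 301}, which already pins down the behaviour of a symmetric rank-1 preserver on rank-1 symmetric tensors. First I would dispatch the easy (``if'') direction: suppose $\phi(\X)=\dbra{B}\cdot\X$ for some invertible $B\in\RPlus^{n\times n}$. Since the complete left product acts modewise, for a rank-1 symmetric tensor $\bx^m$ one has $\dbra{B}\cdot\bx^m=(B\bx)^m$, which is again a rank-1 symmetric tensor; applying this term-by-term to (\ref{eq: symrk1decomp}) shows $\phi$ sends a rank-$R$ symmetric tensor to one of rank at most $R$, and since $B$ invertible gives $\phi^{-1}=\dbra{B^{-1}}\cdot(\cdot)$, the rank is preserved exactly. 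Thus $\phi$ is a symmetric rank preserver, and it is clearly linear and bijective on $\ST_{m,n}$, i.e. $\phi\in\aut(\ST_{m,n})$.

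For the ``only if'' direction, let $\phi\in\aut(\ST_{m,n})$ be a linear symmetric rank preserver. In particular $\phi$ is a rank-1 preserver, so Lemma \ref{le: 301} applies: for each $0\neq\bx\in\R^n$ there are a nonzero scalar $\la(\bx)$ and nonzero vector $\by(\bx)$ with $\phi(\bx^m)=\la(\bx)\,\by(\bx)^m$. As noted in the paragraph preceding Theorem \ref{th: 301}, this data defines a map $\pi$ on $\R^n$; I would make this precise by absorbing the scalar, writing $\phi(\bx^m)=\big(\mu(\bx)\by(\bx)\big)^m$ whenever $m$ is such that $\la$ has an $m$th root (working over $\R$ one takes $\mu(\bx)=\la(\bx)^{1/m}$ when $\la>0$; the sign bookkeeping for $m$ even versus odd is a point to handle carefully), and then showing that $\bx\mapsto\hat{\bx}:=\mu(\bx)\by(\bx)$ is forced to be linear. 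Linearity follows because $\phi$ is linear on the span of rank-1 symmetric tensors: the identity $\phi\big((\bx+\bz)^m\big)=\phi(\bx^m)+\cdots$ expands by the multinomial theorem on both sides, and matching rank-1 components (using that distinct symmetric rank-1 tensors $\bu^m$ are linearly independent for distinct directions $\bu$) forces $\widehat{\bx+\bz}=\hat{\bx}+\hat{\bz}$ and $\widehat{c\bx}=c\hat{\bx}$. Hence $\hat{\bx}=B\bx$ for a fixed matrix $B\in\R^{n\times n}$, and $B$ is invertible because $\phi$ is.

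It then remains to (i) upgrade from rank-1 tensors to all of $\ST_{m,n}$, and (ii) show $B$ can be taken entrywise nonnegative. For (i): every $\X\in\ST_{m,n}$ has a symmetric CP decomposition $\X=\sum_j\al_j^m$, so by linearity $\phi(\X)=\sum_j\phi(\al_j^m)=\sum_j(B\al_j)^m=\dbra{B}\cdot\X$, giving (\ref{eq: phi_symrkposi}). For (ii): here is where I expect the main obstacle — one must invoke that $\phi$ is a \emph{positive} map (preserves nonnegativity of symmetric tensors), which is the hypothesis implicitly carried by the phrase ``symmetric rank-1 preserver'' in Lemma \ref{le: 301} and by $B\in\RPlus^{n\times n}$ in the statement. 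The argument: $\phi$ maps the nonnegative rank-1 tensor $\beps_i^m$ (all-nonnegative) to $(B\beps_i)^m$, and an $m$th power $\bv^m$ is an entrywise nonnegative tensor iff $\bv$ is entrywise nonnegative or (for $m$ even) entrywise nonpositive — in either case we may replace $\bv$ by $|\bv|$ without changing $\bv^m$ when $m$ is even, and when $m$ is odd nonnegativity of $\bv^m$ forces $\bv\ge 0$. Thus each column $B\beps_i$ of $B$ may be chosen $\ge 0$, i.e. $B\in\RPlus^{n\times n}$; combined with invertibility this completes the proof. The delicate points to get right are the parity-dependent sign normalisation of the $m$th root and the precise sense in which ``rank preserver'' carries the positivity assumption, so I would state that standing assumption explicitly at the start of the proof.
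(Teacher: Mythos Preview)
Your approach is essentially the paper's: invoke Lemma~\ref{le: 301} to pass to a map $\pi$ on $\R^n$, represent $\pi$ by an invertible matrix $B$, and extend to all of $\ST_{m,n}$ by linearity. The paper's necessity argument differs in detail --- it expands $\X$ over the coordinate basis $\bfe_{i_1}\times\cdots\times\bfe_{i_m}$ rather than using a symmetric CP decomposition, and it simply \emph{asserts} (via the remark following Lemma~\ref{le: 301}) that $\pi\in\aut(\R^n)$, without attempting your multinomial argument. The paper also never argues the entrywise nonnegativity of $B$ in the proof; you are right that this requires an extra positivity hypothesis, and your column-by-column argument on $\phi(\beps_i^m)$ is the natural way to supply it.

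One genuine soft spot in your write-up: the linearity-of-$\pi$ step leans on the claim that ``distinct symmetric rank-1 tensors $\bu^m$ are linearly independent for distinct directions $\bu$,'' which is false as stated --- $\ST_{m,n}$ is finite-dimensional, so any large enough family of such tensors is dependent, and ``matching rank-1 components'' is not a well-defined operation. If you want to actually prove linearity rather than assume it (as the paper does), a cleaner route is to consider the one-parameter family $t\mapsto\phi\big((\bx+t\bz)^m\big)$, which on one hand is a polynomial in $t$ with coefficients built from symmetrised products of $\hat{\bx}$ and $\hat{\bz}$, and on the other hand equals $\widehat{(\bx+t\bz)}^{\,m}$; comparing coefficients of $t$ then forces additivity. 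Homogeneity is easier since $\phi((c\bx)^m)=c^m\phi(\bx^m)$ directly.
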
 

\begin{proof}
For the necessity, we let $\phi\in \aut(\ST_{m,n})$ be a symmetric rank preserver. By Lemma \ref{le: 301}  we know that $\phi$ is determined by its 
projection on $\R^{n}$, say $\pi\in \aut(\R^{n})$. Now we write $B=[\be_{1},\be_{2}, \ldots, \be_{n}]\in \R^{n\times n}$ with $\be_{j}=\pi(\bfe_{j})$ for $j\in [n]$. 
Here $\bfe_{j}\in \R^{n}$ is the $j$th coordinate vector of $\R^{n}$. It is obvious that $B$ is invertible since $pi$ is an automorphism in $\R^{n}$.  Now we write 
$\X$ as 
\[ \X = \sum\limits_{i_{1},i_{2},\ldots, i_{m}} X_{i_{1}i_{2}\ldots i_{m}} \bfe_{i_{1}}\times\bfe_{i_{2}}\times\ldots \times \bfe_{i_{m}} \]
where $\bfe_{j}\in \R^{n}$ is the $j$th coordinate vector for each $j\in [n]$.  By the linearity we get 
\beyy 
\phi(\X)  &=& \sum\limits_{i_{1},i_{2},\ldots, i_{m}} X_{i_{1}i_{2}\ldots i_{m}} \phi(\bfe_{i_{1}}\times\bfe_{i_{2}}\times\ldots \times \bfe_{i_{m}}) \\
             &=&  \sum\limits_{i_{1},i_{2},\ldots, i_{m}} X_{i_{1}i_{2}\ldots i_{m}} \pi(\bfe_{i_{1}})\times\pi(\bfe_{i_{2}})\times\ldots \times \pi(\bfe_{i_{m}})\\
             &=& \sum\limits_{i_{1},i_{2},\ldots, i_{m}} X_{i_{1}i_{2}\ldots i_{m}}  \be_{i_{1}}\times\be_{i_{2}}\times\ldots \times \be_{i_{m}}
\eeyy 
which is followed by $\phi(\X) = \dbra{B}\cdot{} \X$. \\
\indent Now we prove the sufficiency. Suppose that $\phi\in \aut(\ST_{m,n})$ is a linear map satisfying (\ref{eq: phi_symrkposi}). Denote 
$\hat{\X}:=\phi(\X)=(\hat{X}_{i_{1} i_{2} \ldots i_{m}})$.  We let $\pi$ be the projection of $\phi$ on $\R^{n}$ as defined above, and let (\ref{eq: symrk1decomp} ) 
be the CP decomposition of $\X$. Then for any $(i_{1},i_{2},\ldots, i_{m})\in S(m,n)$, we have by (\ref{eq: phi_symrkposi}) 
\beyy
\hat{X}_{i_{1} i_{2} \ldots i_{m}}  
             &=& \sum\limits_{j_{1},j_{2},\ldots, j_{m}} X_{j_{1} j_{2}\ldots j_{m}} b_{i_{1}j_{1}}b_{i_{2}j_{2}}\ldots b_{i_{m}j_{m}}\\
             &=& \sum\limits_{j=1}^{R} [\sum\limits_{j_{1},j_{2},\ldots, j_{m}} x_{j_{1}j} x_{j_{2} j}\ldots x_{j_{m} j} b_{i_{1}j_{1}}b_{i_{2}j_{2}}\ldots b_{i_{m}j_{m}}] \\
             &=& \sum\limits_{j=1}^{R} (\hal_{j}^{m})_{i_{1} i_{2} \ldots i_{m}}  
\eeyy
It follows that  
\[ \hat{\X} = \sum\limits_{j=1}^{R}  \hal_{j}^{m}  \]
which means that $\phi\in \aut(\ST_{m,n})$ is a linear symmetric rank preserver induced by $\pi$.  
\end{proof}     

\indent  Let $B^{(k)} =(b^{(k)}_{ij}) \in \R^{p_{k}\times q_{k}}$ for $k\in \dbra{m}$. We define the $2m$-order tensor $\B:=\B(B^{(1)},\ldots, B^{(m)})$ as 
\[ \B=B^{(1)}\times B^{(2)} \times\ldots \times B^{(m)} =[B_{i_{1}\ldots i_{m}; j_{1}\ldots j_{m}}]  \]
whose entries are defined by 
\beq\label{eq: defBtensor}  
B_{i_{1}\ldots i_{m}; j_{1}\ldots j_{m}} = b^{(1)}_{i_{1}j_{1}}b^{(2)}_{i_{2}j_{2}}\ldots b^{(m)}_{i_{m}j_{m}} 
\eeq
$\B:=\B(B^{(1)},\ldots, B^{(m)})$ is called the \emph{tensor product} of $(B^{(1)},\ldots, B^{(m)})$, and is of size 
$p_{1}\times \ldots \times p_{m}\times q_{1}\times \ldots \times q_{m}$. Then we have 
\beq\label{eq: mB2tensor}
\dbra{B_{1},B_{2},\ldots, B_{m}}\cdot{} \A = \B\times \A
\eeq
\indent  For $B_{1}=\ldots =B_{m}=B\in \R^{n\times n}$, we write $\B:=\B(B,\ldots, B)\in \T_{2m;n}$. By Theorem \ref{th: 301}, we get 
\beq\label{eq: unifysympreserv}  
\phi(\X) = \B\times \X,  \quad  \forall \X\in \ST_{m; n}   
\eeq
For $m=2$, (\ref{eq: unifysympreserv}) turns out to be $\phi(X)=BXB^{\top}$. It is immediate from Theorem \ref{th: 301} that each column of $B$ is exactly 
the image of the projection of $\phi$ on $\R^{n}$.  From Theorem \ref{th: 301} we obtain 

\begin{cor}\label{co: rkpres01}
Let $\phi\in \aut(\C^{n\times n})$. Then $\phi$ is a symmetric rank preserver  if and only if there exist invertible matrix $P\in \C^{n\times n}$ such that
\beq\label{eq: rkpres0101}
\phi(X)=X\times_{1} P^{\top}\times_{2} P, \quad \forall X\in \C^{n\times n} 
\eeq
\end{cor}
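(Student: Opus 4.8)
The plan is to read off Corollary~\ref{co: rkpres01} as the $m=2$ instance of Theorem~\ref{th: 301}, after translating the two pieces of notation into one another. First I would note that a symmetric $(2,n)$-tensor $\X=(X_{ij})$ with $X_{ij}=X_{ji}$ is nothing but a symmetric $n\times n$ matrix, and that over $\C$ the symmetric CP-rank appearing in \eqref{eq: symrk1decomp} (the least $R$ with $\X=\sum_{j=1}^{R}\al_j^{2}=\sum_{j=1}^{R}\al_j\al_j^{\top}$) equals the ordinary matrix rank, by the Autonne--Takagi factorization. Hence a \emph{symmetric rank preserver} in the sense of Theorem~\ref{th: 301} with $m=2$ is exactly a linear bijection of the space of complex symmetric matrices that preserves $\rank$, which is the object the corollary is about; the clause ``for all $X\in\C^{n\times n}$'' then simply records that the resulting formula makes sense on the whole matrix space, congruence carrying symmetric matrices to symmetric matrices.

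Then I would specialise the proof of Theorem~\ref{th: 301}. Applying Lemma~\ref{le: 301} with $m=2$, $\phi$ sends every rank-one symmetric matrix $\bx\bx^{\top}$ to $\la(\bx)\,\by(\bx)\by(\bx)^{\top}$ with $\by(\bx)\neq0$, and, as in the paragraph following that lemma, $\bx\mapsto\by(\bx)$ is, up to scalars, a single invertible linear map $\pi$ of $\C^{n}$; put $P=[\pi(\bfe_1),\dots,\pi(\bfe_n)]$, which is invertible. Expanding an arbitrary symmetric $\X$ as $\X=\sum_{i,j}X_{ij}\,\bfe_i\bfe_j^{\top}$ and using linearity and polarization exactly as in the necessity part of Theorem~\ref{th: 301} (the classical Marcus--Moyls computation restricted to the symmetric subspace) yields $\phi(\X)=P\X P^{\top}$. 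It then remains only to observe that $P\X P^{\top}$ is precisely $\X\times_1 P^{\top}\times_2 P$ once the mode-$1$ and mode-$2$ products of Section~3 are unwound, which is \eqref{eq: rkpres0101}. For the converse, if $\phi(X)=X\times_1 P^{\top}\times_2 P=PXP^{\top}$ with $P$ invertible, then $\phi$ is linear and bijective with inverse $X\mapsto P^{-1}X(P^{-1})^{\top}$, it carries symmetric matrices to symmetric matrices, and congruence by an invertible matrix preserves rank, so $\phi$ is a symmetric rank preserver.

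I expect the main obstacle to be bookkeeping rather than a new idea, and it is twofold. First, one must be sure that for $m=2$ over $\C$ the tensor ranks used in Theorem~\ref{th: 301} (CP-rank \eqref{eq: rk1decomp} and symmetric CP-rank \eqref{eq: symrk1decomp}) both collapse to the matrix rank, so that the corollary's hypothesis is genuinely the $m=2$ hypothesis of Theorem~\ref{th: 301}; over $\R$ this would already be delicate for indefinite matrices, which is why the complex ground field matters here. Second, Theorem~\ref{th: 301} produces a \emph{real, entrywise nonnegative} matrix $B$, whereas the corollary only asks that $P$ be invertible over $\C$; I would therefore check that the proportionality/vanishing-minor argument inside Lemma~\ref{le: 301} never uses nonnegativity and runs verbatim over $\C$ --- nonnegativity having entered Section~3 only through the notion of a ``positive map,'' which is irrelevant once one asks merely for rank preservation. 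Once those two matching points are cleared, the corollary drops out with no further computation.
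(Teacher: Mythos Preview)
Your proposal is correct and follows essentially the same route as the paper: the paper gives no proof at all beyond the clause ``From Theorem~\ref{th: 301} we obtain'' and the remark that \eqref{eq: rkpres0101} in matrix form reads $\phi(X)=P^{\top}XP$, so your plan to specialise Theorem~\ref{th: 301} to $m=2$ and translate $\dbra{B}\cdot \X$ into mode-product notation is exactly what is intended. Your additional care about the Autonne--Takagi identification of symmetric CP-rank with matrix rank over $\C$, and about the spurious $\RPlus$ hypothesis in Theorem~\ref{th: 301} (which, as you note, plays no role in its proof), goes beyond what the paper supplies and is well placed.
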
 
\indent  Formula (\ref{eq: rkpres0101}) in Corollary \ref{co: rkpres01} in the matrix form is $\phi(X)=P^{\top}XP$ which is exactly the form for a (linear) 
symmetric rank preserver.  Now if $\phi$ is also required to be a positive preserver (i.e., preserving the entrywise nonnegativity of a tensor), the we 
have     

\begin{cor}\label{co: 302}
Let $\phi\in \aut(\ST_{m,n})$ be a nonnegative linear symmetric preserver.  Then $\phi$ fixes the identity tensor if and only if  $B$ is a 
permutation matrix  in (\ref{eq: phi_symrkposi}), if and only if  the projection of $\phi$ on $\R^{n}$ preserves the set $\set{\bfe_{1},\bfe_{2},\ldots, \bfe_{n}}$. 
\end{cor}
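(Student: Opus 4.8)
The plan is to reduce each of the three conditions to a single statement about the columns of the matrix $B$, and then to close a short cycle of implications, only one of which requires genuine work.

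First I would fix notation. By Theorem~\ref{th: 301}, $\phi$ has the form $\phi(\X)=\dbra{B}\cdot{}\X$ for some invertible $B=(b_{ij})\in\RPlus^{n\times n}$, and, as recorded just after that theorem, the $j$th column $\be_{j}$ of $B$ equals $\pi(\bfe_{j})$, where $\pi\in\aut(\R^{n})$ is the projection of $\phi$ onto $\R^{n}$. I would take the identity tensor of $\ST_{m,n}$ to be $\caI:=\sum_{i=1}^{n}\bfe_{i}^{m}$, whose $(i_{1},\ldots,i_{m})$-entry is $1$ when $i_{1}=\cdots=i_{m}$ and $0$ otherwise; for $m=2$ this is the identity matrix $I_{n}$. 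Since $\phi(\bfe_{i}^{m})=\pi(\bfe_{i})^{m}=\be_{i}^{m}$, linearity gives $\phi(\caI)=\sum_{i=1}^{n}\be_{i}^{m}$, so ``$\phi$ fixes $\caI$'' is equivalent to the identity $\sum_{i=1}^{n}\be_{i}^{m}=\sum_{i=1}^{n}\bfe_{i}^{m}$.

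Next I would dispose of the easy equivalences. Because $\pi$ is injective and $\{\bfe_{1},\ldots,\bfe_{n}\}$ is finite, $\pi$ preserves this set iff it permutes it, iff $\be_{j}=\bfe_{\sigma(j)}$ for some $\sigma\in S_{n}$, iff $B=[\be_{1},\ldots,\be_{n}]$ is a permutation matrix; this is the equivalence of ``$B$ is a permutation matrix'' with ``$\pi$ preserves $\{\bfe_{1},\ldots,\bfe_{n}\}$''. Moreover, if $B$ is a permutation matrix then $\{\be_{1},\ldots,\be_{n}\}$ and $\{\bfe_{1},\ldots,\bfe_{n}\}$ agree as multisets, so $\sum_{i}\be_{i}^{m}=\sum_{i}\bfe_{i}^{m}$ and $\phi$ fixes $\caI$. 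The remaining implication, ``$\phi$ fixes $\caI$'' $\Rightarrow$ ``$B$ is a permutation matrix'', is the heart of the matter, and I would prove it by comparing entries of $\sum_{i=1}^{n}\be_{i}^{m}=\caI$. For $j\neq k$ the $(j,k,k,\ldots,k)$-entry reads $\sum_{i=1}^{n}b_{ji}\,b_{ki}^{\,m-1}=0$; since $B\ge 0$ and $m\ge 2$, every summand is nonnegative and hence vanishes, so for each $i$ and each $j\neq k$ at least one of $b_{ji},b_{ki}$ is $0$. Thus each column of $B$ has at most one nonzero entry; invertibility of $B$ then forces exactly one nonzero entry per column, in pairwise distinct rows, so $B=PD$ with $P$ a permutation matrix and $D=\mathrm{diag}(d_{1},\ldots,d_{n})$, $d_{l}>0$. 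Finally the $(k,k,\ldots,k)$-entry gives $\sum_{i=1}^{n}b_{ki}^{m}=1$; its single nonzero summand equals $d_{l}^{m}$ for the appropriate $l$, whence $d_{l}=1$, and letting $k$ range over $\dbra{n}$ yields $D=I_{n}$, i.e.\ $B=P$.

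I expect the only subtle point to be the interplay of nonnegativity and invertibility of $B$ in that last implication (the passage to a monomial matrix and then to a permutation matrix); everything else is bookkeeping. I would also flag at the outset the need to confirm that ``the identity tensor'' is the Kronecker-delta tensor $\sum_{i}\bfe_{i}^{m}$, since this is the object that makes $\phi(\caI)=\caI$ specialize, at $m=2$, to the familiar orthogonality-type relation $B^{\top}B=I_{n}$.
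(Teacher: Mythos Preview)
Your proposal is correct and follows essentially the same route as the paper. Both arguments compute $\phi(\caI)=\sum_{j}\be_{j}^{m}$, read off the off-diagonal entries of $\caI$ to conclude (via nonnegativity) that each column of $B$ has at most one nonzero entry, and then use the diagonal entries together with invertibility to force $B$ to be a permutation matrix; your write-up is simply more explicit in isolating the intermediate step $B=PD$ before normalizing $D=I_{n}$.
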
 

\begin{proof}
Let $\phi\in \aut(\ST_{m,n})$ be a nonnegative linear symmetric preserver that fixes the identity tensor. Then by Theorem \ref{th: 301} there exists an invertible matrix $B=[\be_{1},\ldots, \be_{n}]\in \R^{n\times n}$ such that (\ref{eq: phi_symrkposi}) holds. Thus we have 
\beq\label{eq: phi_rk1ident03}
\caI = \caI \times_{1}B\times_{1}B\times_{2} \ldots \times_{m}B 
\eeq 
since $\phi$ preserves the identity tensor. It follows 
\beq\label{eq: corprf30201}
 \caI = \sum\limits_{j=1}^{n} \be_{j}^{m}
\eeq
By the linearity of $\phi$, we have 
\beq\label{eq: thproof30102}
 \caI =\phi(\caI) =\sum\limits_{j=1}^{n} \phi(\eps_{j}^{m}) = \sum\limits_{j=1}^{n} \be_{j}^{m} 
\eeq
Denote $B=[b_{ij}]\in \R^{n\times n}$. By the nonnegativity of $\be_{j}$ and (\ref{eq: thproof30102}), we have 
\beq\label{eq: thproof30103}
 (\be_{j}^{m})_{i_{1}i_{2}\ldots i_{m}} =b_{i_{1}j}b_{i_{2}j}\ldots b_{i_{m}j} =0
\eeq
for all $j\in [n]$ and all $(i_{1},i_{2},\ldots, i_{m})\in S(m,n)$ where $i_{1},i_{2},\ldots, i_{m}$ are not identical. 
It follows that each $\be_{j}$ is a coordinate vector. It turns out that $B\in \R^{n\times n}$ shall be a permutation matrix since by  
(\ref{eq: thproof30102}) each row of $B$ has a unique one. \\ 
\indent  Conversely, we suppose that (\ref{eq: phi_symrkposi}) holds with $B\in \R^{n\times n}$ being a permutation matrix.  We may assume that $B$
corresponds to a permutation $\tau\in S_{n}$, i.e., $b_{ij}=1$ if and only if $j=\tau(i)$ for each $i$.  Denote 
$\hat{\caI}:=\phi(\caI)=(\hat{I}_{i_{1}i_{2}\ldots i_{m}})$. For each $(i_{1}, i_{2}, \ldots , i_{m})\in S(m,n)$, we have 
\beyy
\hat{I}_{i_{1} i_{2} \ldots i_{m}}  
             &=& \sum\limits_{j_{1},j_{2},\ldots, j_{m}} I_{j_{1} j_{2}\ldots j_{m}} b_{i_{1}j_{1}}b_{i_{2}j_{2}}\ldots b_{i_{m}j_{m}}\\
             &=& I_{\tau(i_{1}) \tau(i_{2})\ldots \tau(i_{m})}    
\eeyy
which equals 1 if and only if $\tau(i_{1})=\tau(i_{2}) = \ldots = \tau(i_{m})$ by the definition of the identity tensor. It follows that  
\[ \hat{I}_{i_{1} i_{2} \ldots i_{m}} =1 \ffi  i_{1}= i_{2}= \ldots = i_{m} \in [n] \]
Consequently $\hat{\caI}:= \caI$.  So $\phi$ fixes the identity tensor. \\ 
\indent  The second part of the corollary (i.e., the projection of $\phi$ on $\R^{n}$ preserves set $\set{\bfe_{1},\bfe_{2},\ldots, \bfe_{n}}$)  can be deduced 
directly from the proof of Theorem \ref{th: 301}. 
\end{proof}     
 
\indent  Now we extend the result in Theorem \ref{th: 301} and consider a general linear rank preserver in $\aut(\T_{m,n})$.   
\begin{thm}\label{th: 408}
Let $\phi\in \aut(\T_{m,n})$ be a linear map and  $\A\in \T_{m,n}$  be a rank-$R$ tensor possessing a CP decomposition (\ref{eq: rk1decomp}).  
Then $\phi$ is a rank preserver if and only if there exist invertible matrices $B_{j}\in \R^{n}, j\in \dbra{m}$ and a permutation $\tau\in S_{m}$ such that 
\beq\label{eq: phirkpreserv}
\phi (\A) = \B\times\A\times \K^{\tau}
\eeq  
where $\B:= B_{1}\times B_{2}\times \ldots\times B_{m} \in \T_{2m,n}$ is an $2m$-order tensor defined by (\ref{eq: defBtensor}).   
\end{thm}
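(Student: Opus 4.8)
The plan is to prove the two implications separately: sufficiency is a short bookkeeping argument, while necessity rests on a structure theorem for linear spaces of rank-one tensors. For sufficiency, assume $\phi(\A)=\B\times\A\times\K^{\tau}$ with $\B=B_{1}\times\cdots\times B_{m}$, each $B_{j}$ invertible, and $\tau\in S_{m}$. By Lemma~\ref{le: 403}, $\A\times\K^{\tau}=\A^{\tau}$ merely permutes the modes of $\A$, and by (\ref{eq: mB2tensor}), $\B\times\X=\dbra{B_{1},\ldots,B_{m}}\cdot\X$; hence $\phi(\A)=\dbra{B_{1},\ldots,B_{m}}\cdot(\A^{\tau})$ is obtained from $\A$ by first permuting its modes and then applying an invertible linear map in each mode. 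Both operations are invertible and carry a CP decomposition (\ref{eq: rk1decomp}) of $\A$ of length $R=\rank(\A)$ --- via (\ref{eq: pirk1dec}) for the permutation and termwise for the complete product --- to a CP decomposition of $\phi(\A)$ of the same length $R$, and conversely; hence $\rank(\phi(\A))=R$ and $\phi$ preserves rank.

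For necessity, let $\phi\in\aut(\T_{m,n})$ be a rank preserver (we may assume $n>1$, the case $n=1$ being trivial); then $\phi^{-1}$ is also a rank preserver and both carry rank-one tensors to rank-one tensors. For a fixed index $k$ and fixed nonzero $\al_{i}\in\R^{n}$ ($i\ne k$), the set $V_{k}(\al_{\ast}):=\set{\al_{1}\times\cdots\times\al_{k-1}\times\bx\times\al_{k+1}\times\cdots\times\al_{m}:\bx\in\R^{n}}$ is an $n$-dimensional subspace of $\T_{m,n}$ whose nonzero members all have rank $1$, so $\phi(V_{k}(\al_{\ast}))$ is again such a subspace. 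The first key step is the structure lemma: every subspace $W\subseteq\T_{m,n}$ of dimension $\ge 2$ all of whose nonzero elements have rank $1$ is contained in some $V_{\ell}(\be_{\ast})$; one proves it by observing that two nonproportional rank-one tensors whose sum is again rank-one must be proportional in every mode but one, and that in a subspace of dimension $\ge 3$ this ``free'' mode cannot vary from pair to pair. Since distinct $V_{\ell}(\be_{\ast})$, $V_{\ell'}(\be'_{\ast})$ with $\ell\ne\ell'$ meet in a subspace of dimension $\le 1$, the lemma attaches to $\phi$ an index $\si(k)$ with $\phi(V_{k}(\al_{\ast}))\subseteq V_{\si(k)}(\be_{\ast})$ which is independent of $\al_{\ast}$, because it is locally constant on the connected parameter set $(\R^{n}\setminus\set{0})^{\,m-1}$; applying the same to $\phi^{-1}$ shows $\si\in S_{m}$, and we put $\tau:=\si^{-1}$.

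Because $\phi$ sends mode-$k$ variations to mode-$\si(k)$ variations, it induces on each mode an invertible linear map, unique up to a scalar, so there are invertible matrices $B_{1},\ldots,B_{m}$ and a nonzero scalar function $\mu$ with $\phi(\al_{1}\times\cdots\times\al_{m})=\mu(\al_{1},\ldots,\al_{m})\,(B_{1}\al_{\tau(1)})\times\cdots\times(B_{m}\al_{\tau(m)})$ for every rank-one tensor. To finish we must make $\mu$ a constant: comparing $\phi(\al_{1}\times\al_{2}\times\cdots)+\phi(\al_{1}'\times\al_{2}\times\cdots)$ with $\phi((\al_{1}+\al_{1}')\times\al_{2}\times\cdots)$ and using that the two summands on the left differ only in one mode forces $\mu(\al_{1},\al_{2},\ldots)=\mu(\al_{1}',\al_{2},\ldots)$ whenever $\al_{1},\al_{1}'$ are independent; iterating over all modes and again invoking connectedness gives $\mu\equiv c_{0}$, a constant, which we absorb into $B_{1}$. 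By Lemma~\ref{le: 403} and (\ref{eq: mB2tensor}) the resulting identity $\phi(\al_{1}\times\cdots\times\al_{m})=(B_{1}\al_{\tau(1)})\times\cdots\times(B_{m}\al_{\tau(m)})$ is precisely (\ref{eq: phirkpreserv}) evaluated on a rank-one tensor with $\B=B_{1}\times\cdots\times B_{m}$ as in (\ref{eq: defBtensor}); since rank-one tensors span $\T_{m,n}$ and both sides of (\ref{eq: phirkpreserv}) are linear in the argument, the identity extends to all $\A\in\T_{m,n}$. The main obstacle is the structure lemma together with the two rigidity facts --- independence of $\si(k)$ from $\al_{\ast}$ and constancy of $\mu$ --- everything else being routine.
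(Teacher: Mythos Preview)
Your sufficiency argument is essentially the paper's (the paper says ``it suffices to prove that it preserves the rank-1 tensor'' and checks this on a single outer product); your version is a touch more careful in noting that the inverse map has the same shape, which is what actually pins down $\rank(\phi(\A))=R$ rather than $\le R$.

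For necessity the two proofs diverge. The paper does not prove the structure of rank-one preservers at all: it simply invokes Corollary~3.10 of \cite{Chooi2017} to assert that $\phi(\al_{1}\times\cdots\times\al_{m})=\hal_{i_{1}}\times\cdots\times\hal_{i_{m}}$ with $i_{k}=\tau(k)$ and $\hal_{j}=\pi_{j}(\al_{j})$ for some $\tau\in S_{m}$ and linear automorphisms $\pi_{j}$, and then packages this via Lemma~\ref{le: 403} and (\ref{eq: mB2tensor}) into the form (\ref{eq: phirkpreserv}). You instead reprove that structural fact from scratch: your ``structure lemma'' on subspaces of rank-one tensors, the rigidity of the index map $\si$, and the constancy of the scalar $\mu$ together amount to an independent proof of the cited result of Chooi--Kwa--Lim (in the spirit of the classical Marcus--Moyls/Westwick arguments). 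What your route buys is self-containment --- nothing outside the paper is needed --- at the cost of length; what the paper's route buys is brevity, at the cost of importing the hardest step. One caveat: your ``locally constant on the connected set $(\R^{n}\setminus\{0\})^{m-1}$'' step is really an algebraic, not a topological, argument (there is no a priori continuity of $\si(k)$ as a function of $\al_{\ast}$); when you write it out, replace the connectedness appeal by the overlap argument --- changing one $\al_{j}$ at a time and using that $V_{k}(\al_{\ast})$ and $V_{k}(\al_{\ast}')$ both sit inside a common two-parameter family of rank-one tensors --- which is how the classical proofs proceed.
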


\begin{proof}
For the sufficiency, we let $\phi$ be defined by ({eq: phirkpreserv}). We want to prove that it is a rank preserver. It suffice to prove that it preserves the rank-1 tensor. 
But this is an obvious fact when we look at the rank-1 tensor $\A=\al_{1}\times\al_{2}\times  \ldots \times \al_{m}$ since by (\ref{eq: phirkpreserv}) we have 
\beyy\label{eq: rk1preserv}
\phi (\A)  &=& \dbra{B_{1},B_{2},\ldots, B_{m}}\cdot{} \A\times \K^{\tau} \\
              &=& \dbra{B_{1},B_{2},\ldots, B_{m}}\cdot{} \A^{\tau} \\
              &=& \hal_{i_{1}}\times\hal_{i_{2}}\times\ldots \times \hal_{i_{m}} 
\eeyy  
where $i_{k}=\tau(k)$ and $\hal_{i_{k}}=B_{k}\al_{i_{k}}\in \R^{n}$ is nonzero due to the nonsingularity of $B_{k}$. \\     
\indent  Conversely,  let $\A\in \T_{m,n}$  be a rank-$R$ tensor with CP decomposition (\ref{eq: rk1decomp}) and $\phi$ be a rank preserver. Denote by 
$\pi_{k}$ the projection of $\phi$ on direction $k$ ($k\in \dbra{m}$).  Then by Corollary 3.10 of \cite{Chooi2017} we have    
\beyy\label{eq: rkpreserv01}
\phi(\A)  &=& \sum\limits_{j=1}^{R} \phi(\al_{1j}\times \al_{2j}\times \ldots \times \al_{mj})\\
              &=& \sum\limits_{j=1}^{R} \hal_{i_{1},j}\times\hal_{i_{2},j}\times \ldots \times \hal_{i_{m},j}\\
              &=& \sum\limits_{j=1}^{R} \al_{i_{1},j}\times\hal_{i_{2},j}\times \ldots \times \hal_{i_{m},j}\\  
\eeyy
with $i_{k} =\tau(k)$ where $\tau$ is a permutation on $\dbra{m}$ and $\hal_{j}=\pi_{j}(\al_{j})$. Now we denote 
\[ \hat{\A}_{j}^{\tau} := \al_{i_{1},j}\times\hal_{i_{2},j}\times \ldots \times \hal_{i_{m},j}, \quad  \forall j\in \dbra{m} \]
By Lemma \ref{le: 403} we have $\hat{\A}_{j}^{\tau} = \hat{\A}_{j}\times \K^{\tau}$ for each $j\in \dbra{m}$. It follows that  
\beyy\label{eq: rkpreserv02}
\hat{\A} =\phi(\A)  
              &=& \sum\limits_{j=1}^{R} \hat{\A}_{j}^{\tau}\\
              &=& \sum\limits_{j=1}^{R} \hat{\A}_{j}\times \K^{\tau}\\
              &=& (\sum\limits_{j=1}^{R} \hal_{1j}\times\hal_{2j}\times \ldots \times \hal_{mj})\times \K^{\tau} \\
              &=& \dbra{B_{1},\ldots, B_{m}}\cdot{}\A \times \K^{\tau}\\
              &=& \B\times\A\times\K^{\tau}   
\eeyy
The second last equality is due to the fact that for each $j\in \dbra{m}$ there exists an invertible matrix $B_{j}\in \R^{n\times n}$ such that $\phi_{j}(\bx) =B_{k}\bx$ for all 
$\bx\in \R^{n}$ since $\phi_{j}\in \aut(\R^{n})$ (the set of linear automorphisms $\aut(\R^{n})$ of $\R^{n}$ is isomorphic to the general linear group $GL_{n}$ 
consisting of all invertible matrices in $\R^{n\times n}$), and the last equality follows from (\ref{eq: mB2tensor}). Thus there exist invertible matrices 
$B_{1}, B_{2},\ldots, B_{m}$ such that $\hal_{kj}= B_{k}\al_{k}$. The proof is completed.  
\end{proof}

\indent  We call tensor $\B=B_{1}\times \ldots \times B_{m}$  an \emph{associated tensor} with $\phi$ if $\phi_{j}(\bx) =B_{k}\bx$ for all $\bx\in \R^{n}$ where $\phi_{j}$ is the 
projection of $\phi$ on direction $j$. Note that if we fix a $\tau$, then $\phi$ is associated with an $\B$ uniquely. Conversely, from the argument above, we see that 
there are $m!$ linear rank preservers in $\aut(\T_{m;n})$ associated with an $2m$-order tensor generated by $(B_{1},\ldots, B_{m})$.  Also if we choose $\tau$ to be the 
identity map on $\dbra{m}$, then by (\ref{eq: phirkpreserv}) the associated linear rank preserver $\phi$ acts on $\T_{m;n}$ in form 
\beq
\phi(\X) = \B\times \X,  \quad  \forall \X\in \T_{m;n}
\eeq  
\indent  In order to describe the linear identity and rank preserver,  we let $B^{(k)}=(b^{(k)}_{ij})\in \R^{m\times n}, \forall k\in \dbra{p}$, and let 
$\B:=(B_{ijk})\in \R^{m\times n\times p}$ be the 3-order tensor defined by $B(:,:,k)=B^{(k)}$ for each $k\in \dbra{p}$, and define 
$\hat{\B}:= (\hat{B}_{i_{1}i_{2}\ldots i_{p}})$ as the Hadamard product of $(B^{(1)}, B^{(2)}, \ldots, B^{(p)})$, i.e., 
\[ \hat{B}_{i_{1}i_{2}\ldots i_{p}}= \sum\limits_{j=1}^{n} b^{(1)}_{i_{1}j}b^{(2)}_{i_{2}j}\ldots b^{(1)}_{i_{1}j} \]
Here each entry of $\hat{\B}$ can be regarded as the Hadamard product of the corresponding $p$ mode-3(along the third direction) slices of tensor $\B$.  
When each $B^{(k)}\in \R^{n\times n}$ is a permutation matrix, say, corresponding to a permutation $\pi_{k}$, then we have   
It follows immediately from that 
 \begin{cor}\label{co: 409}
Let $\phi$ be a linear positive rank preserver on $\T_{m,n}$ associated with the identity map $\caI$.  Then it fixes the identity tensor if  
tensor $\B$ associated with $\phi$ is an identity tensor, i.e.,  
\beq\label{eq: rkidentpreserv}
\B\times \caI =\caI
\eeq  
\end{cor}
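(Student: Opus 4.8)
The plan is to obtain the conclusion as a direct specialisation of Theorem~\ref{th: 408} to the identity permutation. First I would recall from item (2) of Theorem~\ref{th:tpergroup} that the generalised commutation tensor $\K^{id}$ attached to the identity map on $\dbra{m}$ is the identity element for the tensor multiplication (\ref{eq:defprodin2pn}), so that $\A\times\K^{id}=\A$ whenever this product makes sense, in particular for an $m$-order tensor $\A$ (the index computation is the one carried out in the proof of Theorem~\ref{th:tpergroup}). Consequently, since $\phi$ is by hypothesis the linear rank preserver of Theorem~\ref{th: 408} with $\tau$ taken to be the identity map, the representation (\ref{eq: phirkpreserv}) collapses to $\phi(\X)=\B\times\X$ for all $\X\in\T_{m,n}$, where $\B=B_{1}\times\ldots\times B_{m}$ is the $2m$-order tensor associated with $\phi$, defined by (\ref{eq: defBtensor}).

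Next I would simply evaluate this representation at the identity tensor $\caI$. Since $\caI\in\T_{m,n}$ — indeed it is the symmetric tensor $\caI=\sum_{j=1}^{n}\bfe_{j}^{m}$, so it lies in the domain of $\phi$ and even admits a rank-one decomposition of the type (\ref{eq: symrk1decomp}) — the formula $\phi(\X)=\B\times\X$ may be applied with $\X=\caI$, giving $\phi(\caI)=\B\times\caI$. Invoking the hypothesis that $\B$ is an identity tensor, i.e.\ $\B\times\caI=\caI$, we immediately obtain $\phi(\caI)=\caI$, which is precisely the statement that $\phi$ fixes the identity tensor.

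No genuine obstacle arises here: the corollary is an immediate consequence of Theorem~\ref{th: 408}, the only substantive inputs being the triviality of the action of $\K^{id}$ under the tensor multiplication (Theorem~\ref{th:tpergroup}(2)) and the fact that $\caI$ belongs to $\T_{m,n}$, so that the representation $\phi(\X)=\B\times\X$ can be specialised to $\X=\caI$. The positivity of $\phi$, although part of the standing hypothesis, is not needed for this direction; it would become relevant only for a converse statement, where — in the single-matrix symmetric setting — Corollary~\ref{co: 302} already identifies the condition $\B\times\caI=\caI$ with $B$ being a permutation matrix.
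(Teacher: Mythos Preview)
Your proposal is correct and matches the paper's approach: the paper gives no explicit proof of this corollary, only the remark ``It follows immediately from that'' preceding the statement, meaning it is an immediate specialisation of Theorem~\ref{th: 408} with $\tau=id$. Your argument --- invoking Theorem~\ref{th:tpergroup}(2) to reduce (\ref{eq: phirkpreserv}) to $\phi(\X)=\B\times\X$ and then evaluating at $\X=\caI$ --- is exactly this specialisation, and your observation that positivity is unused in this direction is also correct.
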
 

\begin{cor}\label{co: 403}
Let $\phi\in \aut(\R^{n\times n})$ be a linear map.  Then $\phi$ is a rank preserver if and only if  there is invertible matrices $B_{1}, B_{2}\in \R^{n\times n}$
such that 
\beq\label{eq: grkpres}
\phi (X) = X\times_{1} B_{1}\times_{2} B_{2} 
\eeq 
for any real matrix $X\in \R^{n\times n}$.
\end{cor}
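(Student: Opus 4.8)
The plan is to obtain the corollary as the matrix case ($m=2$) of Theorem~\ref{th: 408}. First I would note that $\R^{n\times n}$ is exactly $\T_{2,n}$, so a linear map $\phi\in\aut(\R^{n\times n})$ is a rank preserver if and only if it is one in the sense of Theorem~\ref{th: 408} with $m=2$; moreover every $X\in\R^{n\times n}$ of rank $R$ admits a rank factorization $X=\sum_{j=1}^{R}\al_{j}\times\be_{j}$, which is a CP decomposition of the form (\ref{eq: rk1decomp}), so the hypotheses of Theorem~\ref{th: 408} hold automatically here. For the ``if'' direction I would check directly that $\phi(X)=X\times_{1}B_{1}\times_{2}B_{2}$ with $B_{1},B_{2}$ invertible is an automorphism of $\R^{n\times n}$ (its inverse is $X\mapsto X\times_{1}B_{1}^{-1}\times_{2}B_{2}^{-1}$) and that, in matrix form, it sends a rank-one matrix $\al\be^{\top}$ to $(B_{1}^{\top}\al)(B_{2}^{\top}\be)^{\top}$, still rank one; since $\phi$ is bijective it then preserves the rank of every matrix. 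Equivalently, this is the sufficiency half of Theorem~\ref{th: 408} with $m=2$, $\tau=\mathrm{id}$ and $\B=B_{1}\times B_{2}$.

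For the ``only if'' direction I would apply Theorem~\ref{th: 408} with $m=2$: there are invertible $B_{1},B_{2}\in\R^{n\times n}$ and $\tau\in S_{2}$ with $\phi(X)=\B\times X\times\K^{\tau}$, $\B=B_{1}\times B_{2}$. Since $S_{2}=\{\mathrm{id},(12)\}$ there are two branches. If $\tau=\mathrm{id}$, then $\K^{\mathrm{id}}$ is the identity element of $\T_{4,n}$ (Theorem~\ref{th:tpergroup}), so $\phi(X)=\B\times X=\dbra{B_{1},B_{2}}\cdot X=X\times_{1}B_{1}\times_{2}B_{2}$ by (\ref{eq: mB2tensor}), which is precisely (\ref{eq: grkpres}). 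If $\tau=(12)$, then Lemma~\ref{le: 403} (equivalently Theorem~\ref{th:ctensor02}) gives $X\times\K^{(12)}=X^{\top}$, hence $\phi(X)=\B\times X^{\top}=X^{\top}\times_{1}B_{1}\times_{2}B_{2}=B_{1}^{\top}X^{\top}B_{2}$; this is the transpose companion of (\ref{eq: grkpres}), i.e.\ the second of the two classical Marcus--Moyls forms in Lemma~\ref{le: rk1pre02}, which I would record alongside (so the corollary really reads (\ref{eq: grkpres}) \emph{or} its transpose; if one restricts to the component of rank preservers containing the identity, only (\ref{eq: grkpres}) survives).

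The only genuinely error-prone step, and the one I would flag as the main obstacle, is the translation between the mode products $\times_{1},\times_{2}$ appearing in Theorem~\ref{th: 408} and ordinary matrix multiplication: with the convention of the excerpt one has $X\times_{1}B_{1}\times_{2}B_{2}=B_{1}^{\top}XB_{2}$, so the transposes must be tracked carefully, and one must be sure that invertibility of $B_{1},B_{2}$ (which is what guarantees $\phi\in\aut$) is not lost under this identification. Once the identification $\T_{2,n}=\R^{n\times n}$ is fixed and the two elements of $S_{2}$ are treated, everything else is routine bookkeeping.
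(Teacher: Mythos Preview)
Your approach matches the paper's: the corollary is presented as the $m=2$ specialization of Theorem~\ref{th: 408}, and the paper gives no proof beyond the remark that (\ref{eq: grkpres}) ``is exactly the unified form of two formulae (\ref{eq: pres01}) and (\ref{eq: pres02}).'' Your treatment is in fact more careful than the paper's own. You correctly handle both elements of $S_{2}$, correctly compute $X\times_{1}B_{1}\times_{2}B_{2}=B_{1}^{\top}XB_{2}$ from the paper's mode-product convention, and rightly observe that this recovers only the form (\ref{eq: pres01}); the $\tau=(12)$ branch, via $X\times\K^{(12)}=X^{\top}$, is needed to produce (\ref{eq: pres02}). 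The paper's assertion that the single formula (\ref{eq: grkpres}) unifies both classical forms thus really relies on retaining the $\K^{\tau}$ factor from Theorem~\ref{th: 408}, which the corollary's statement drops. Your flag on this point is legitimate and reflects a genuine imprecision in the corollary as stated rather than a defect in your argument.
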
 
Note that formula (\ref{eq: grkpres}) is exactly the unified form of two formulae (\ref{eq: pres01}) and (\ref{eq: pres02}).

\section*{Acknowledgement}
This work was partially supported by Hong Kong Research fund(No. PolyU 502111, 501212). 

\vskip 50pt

\end{document}